\numberwithin{equation}{section}
\@date \else {\vskip3ex \centering\footnotesize\@date\par\vskip1ex}\fi
\else \@footnotetext{\@setdate}\fi}
\newtheorem{theorem}{Theorem}[section]
\newtheorem{lemma}[theorem]{Lemma}
\newtheorem{proposition}[theorem]{Proposition}
\newtheorem*{theorem*}{Theorem}
\newtheorem*{lemma*}{Lemma}
\newtheorem*{proposition*}{Proposition}
\newtheorem*{corollary*}{Corollary}
\newcommand*{\dd}{\mathop{}\!\mathrm{d}}
\def\Lip{\mathrm{Lip}}
\def\EE{\mathbb{E}}
\def\PP{\mathbb{P}}
\def\P{\mathbb{P}}
\renewcommand{\phi}{\varphi}
\def\1{\mathbf{1}}
\def\XXint#1#2#3{{\setbox0=\hbox{$#1{#2#3}{\int}$ }
\vcenter{\hbox{$#2#3$ }}\kern-.57\wd0}}
\newcommand{\id}{\mathsf{id}}
\newcommand{\mres}{\mathbin{\vrule height 1.6ex depth 0pt width
0.13ex\vrule height 0.13ex depth 0pt width 1.3ex}}
\begin{document}
\title[From Combinatorics to PDEs]{From Combinatorics to Partial Differential Equations}

\author{Francesco Mattesini \address[Francesco Mattesini]{Universit\"at M\"unster \& Max Planck Institute for Mathematics in the Sciences Leipzig,  Germany} \email{mattesini.francesco@gmail.com}  \hspace*{0.5cm} Felix Otto \address[Felix Otto]{Max Planck Institute for Mathematics in the Sciences Leipzig, Germany} \email{Felix.Otto@mis.mpg.de} } 
\thanks{All authors are supported by the Deutsche Forschungsgemeinschaft (DFG, German Research Foundation) through the SPP 2265 {\it Random Geometric Systems}. FM has been funded by the Deutsche Forschungsgemeinschaft (DFG, German Research Foundation) under Germany's Excellence Strategy EXC 2044 -390685587, Mathematics M\"unster: Dynamics--Geometry--Structure. FM has been funded by the Max Planck Institute for Mathematics in the Sciences.  }

\begin{abstract}
The optimal matching of point clouds in $\mathbb{R}^d$ is a combinatorial problem;
applications in statistics motivate to consider random point clouds,
like the Poisson point process.
There is a crucial dependance on dimension $d$, 
with $d=2$ being the critical dimension.
This is revealed by adopting an analytical perspective, 
connecting e.\,g.~to Optimal Transportation. 
These short notes provide an introduction to the subject. 
The material presented here is based on a series of lectures held at the International 
Max Planck Research School during the summer semester 2022. Recordings of the lectures are available  at 
\url{https://www.mis.mpg.de/events/event/imprs-ringvorlesung-summer-semester-2022}.
\end{abstract}

\date{\today}
\maketitle
\tableofcontents
\section{Introduction}
Given a probability measure $\lambda$ and $N$ independent random samples $X_1, \dots, X_N$ of 
$\lambda$ we define the empirical measure
\begin{align}\label{fo01}
\mu^N := \frac1N \sum_{n=1}^N \delta_{X_n}. 
\end{align}
A natural statistical question is to ask how close the random measure (\ref{fo01})
is to the deterministic measure $\lambda$ for $N\gg 1$. 

\medskip
For the purpose of these notes we will slightly modify the question and investigate
the closeness between two independent copies of (\ref{fo01}). 
More precisely, we consider $2N$ independent samples $X_1, \dots, X_N, Y_1, \dots, Y_N$ from $\lambda$ 
and study how close the two empirical measures
\begin{align}\label{fo04}
\text{$\frac1N\sum_{n=1}^N \delta_{X_n}$ and $\frac1N\sum_{n=1}^N \delta_{Y_n}$}
\end{align}
are. This has the advantage that closeness can be restated in terms of the support of the two measures, 
i.\,e. distance between points. Indeed, for the empirical measures above it is natural to define 
the following distance
\begin{equation}\label{eq:matchingprob}
R:=\Big(\frac1N \min_{\sigma \in S_N} \sum_{n=1}^N |Y_{\sigma(n)} - X_n|^2\Big)^\frac{1}{2},
\end{equation}
where the minimum is taken over the set $S_N$ of all permutations of $\{1,\cdots,N\}$.
Clearly, (\ref{eq:matchingprob}) does not depend on the way the two sets of points are
enumerated and thus can be seen as a distance between the point clouds $\{X_1,\cdots,X_N\}$
and $\{Y_1,\cdots,Y_N\}$, and thus between the empirical measures (\ref{fo04}).
Any permutation $\sigma$ corresponds to a matching of the two point clouds;
(\ref{eq:matchingprob}) optimizes over matchings.
What is optmized in (\ref{eq:matchingprob}) is the sum of the {\it square} of the distances 
between matched points, and thus $R$ defines the square-averaged distance between matched points.

\medskip

The aim of these notes is to establish a classical result of the optimal matching theory, 
related to the asymptotics of the matching cost \eqref{eq:matchingprob} for $N\uparrow\infty$
in the simple case when $\lambda$ is the uniform measure on the box $[0,L]^d$. 
Hence the number density, i.\,e.~the number of points per volume is given by $NL^{-d}$.
In other words, the volume per particle is $r^d$ where 
\begin{align}\label{fo02}
r:=L{N^{-\frac1d}},
\end{align}
so that the distance to the closest point is of order $r$.
A natural question is how the length scale (\ref{eq:matchingprob})
compares to the length scale (\ref{fo02}). Clearly, $R$ is at least of the order
of $r$. Interestingly, it turns out that despite the optimization, $R$ can be much larger than $r$,
at least in low dimensions.
%
The following theorem summarizes how the asymptotic matching differs from  the microscopic scale.

\begin{theorem}\label{thm:AKTasymptotic}
Let $X_1, \dots, X_N, Y_1, \dots, Y_N$ be i.\,i.\,d. random variables taking values in the box $[0,L]^d$ with uniform distribution, i.\,e.
\[
\PP(X_i \in A) = \frac{|A|}{L^d}, \quad \text{for every $A\subseteq [0,L]^d$ Borel}, 
\]
and $|A|$ denotes the Lebesgue measure of $A$. The following holds true
\begin{equation}\label{eq:thmasymptotic}
\sqrt{\EE \bigg[\frac1N \min_{\sigma \in \mathcal{S}_N}\sum_{n=1}^N |Y_{\sigma(n)} - X_n|^2\bigg]} \sim
r  \begin{cases}
\sqrt N & \text{if $d=1$},\\
\sqrt{ \ln N} & \text{if $d=2$}, \\
1 & \text{if $d>2$},
\end{cases}
\end{equation}
\end{theorem}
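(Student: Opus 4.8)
The plan is to reduce the combinatorial matching cost to a PDE problem via the Benamou--Brenier / linearized optimal transport heuristic, and to treat the three regimes $d=1$, $d=2$, $d>2$ by scaling together with a dyadic (multiscale) decomposition. The starting point is the observation that, after subtracting the two empirical measures, one is led to estimate a negative Sobolev-type norm of the fluctuation $\mu^N-\nu^N$ where $\mu^N,\nu^N$ are the two empirical measures in \eqref{fo04}. More precisely, Kantorovich duality and the quadratic cost give, at leading order for $N\gg1$, the equivalence $R^2\approx \|\mu^N-\nu^N\|_{\dot H^{-1}([0,L]^d)}^2$, i.e.\ $R^2\approx\inf\{\int_{[0,L]^d}|\nabla\varphi|^2 : -\Delta\varphi=\mu^N-\nu^N\}$ (with Neumann data). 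This linearization is exactly the passage ``from combinatorics to PDEs'' advertised in the title; making it rigorous (both the upper and the lower bound, with the error terms controlled) is one of the two technical cores of the argument.

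Granting the $\dot H^{-1}$ reformulation, the second core is a variance/spectral computation. By Poisson-ization (or a direct combinatorial argument) one may treat the $X$'s and $Y$'s as two independent Poisson point processes of intensity $N/L^d$; then $\mu^N-\nu^N$ is a mean-zero random measure whose covariance is, up to normalization, $\tfrac{2}{L^d}$ times Lebesgue measure (a ``white noise''). Expanding $\varphi$ in the Neumann eigenfunctions $\{e_k\}$ of $-\Delta$ on $[0,L]^d$ with eigenvalues $\lambda_k\sim |k|^2/L^2$, one gets
\begin{align}\label{eq:spectral}
\EE\, R^2 \;\approx\; \frac1N\sum_{k\neq 0}\frac{1}{\lambda_k}\;\approx\;\frac{L^2}{N}\sum_{0<|k|\lesssim N^{1/d}}\frac{1}{|k|^2},
\end{align}
where the cutoff $|k|\lesssim N^{1/d}$ reflects that the linearization is valid only down to the microscopic scale $r=LN^{-1/d}$ (below that scale the point cloud is genuinely discrete and the $\dot H^{-1}$ picture breaks down). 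The sum in \eqref{eq:spectral} is $\sim N^{2/d-1}$ for $d=1$, $\sim\ln N$ for $d=2$, and $\sim 1$ for $d\ge 3$; multiplying by $L^2/N$ and recalling $r^2=L^2N^{-2/d}$ reproduces exactly the three cases $r^2 N$, $r^2\ln N$, $r^2$ of \eqref{eq:thmasymptotic}. This heuristic computation is what the rigorous proof must upgrade to matching $\lesssim$ and $\gtrsim$ bounds.

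For the rigorous upper bound I would build a competitor transport map by the standard regularization-at-scale-$\ell$ trick: convolve $\mu^N-\nu^N$ at a mesoscopic scale, solve the (linear) Poisson problem for the smoothed data to move mass on scales $\ge\ell$, and handle the residual (the difference between the empirical measure and its mollification on scales $\le\ell$) by a crude local matching whose cost is controlled by the typical number of points in a box of size $\ell$. Optimizing over $\ell$, or rather summing the contributions dyadically over $r\le 2^j\le L$, yields the upper bound in each dimension; in $d=2$ the borderline decay $\sum 2^{-2j}\cdot(\text{area})$ is what produces the logarithm. For the lower bound I would use the dual formulation: exhibit a single well-chosen test function $\varphi$ (e.g.\ a random trigonometric polynomial with frequencies at the relevant dyadic scale, or in $d=2$ a superposition across all dyadic scales from $r$ to $L$) and show that $\int\varphi\,d(\mu^N-\nu^N)$ is, with high probability, as large as the square root of the right-hand side of \eqref{eq:spectral}, while $\int|\nabla\varphi|^2$ is of order one; a second-moment (Paley--Zygmund) argument upgrades this to a bound in expectation.

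The main obstacle is the rigorous linearization, i.e.\ controlling the nonlinear error in replacing the Monge--Kantorovich cost by the $\dot H^{-1}$ norm \emph{uniformly down to the microscopic scale} $r$. The naive Benamou--Brenier estimate loses control precisely when the density of $\mu^N$ becomes non-perturbative, which happens exactly at scale $r$; closing this gap is what forces the dyadic decomposition and a careful bootstrap, matching the empirical measures at successively finer scales and invoking the linear theory only on the (rescaled, unit-density) mesoscopic problem where the fluctuation is genuinely small. In $d=2$ this must be done with logarithmically many scales and the per-scale errors must be summable, which is the delicate point; in $d=1$ the sum diverges and one instead sees directly that the $X$'s and $Y$'s, sorted, differ by a random walk of length $\sim\sqrt N$ lattice spacings, giving $R\sim r\sqrt N$; in $d\ge3$ the scales are summable and the coarsest scale $L$ dominates, giving $R\sim r$.
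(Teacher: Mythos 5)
Your proposal is a correct high-level route, but it is genuinely different from the one the paper takes, and the difference is instructive. You propose to linearize the quadratic transport cost into the negative Sobolev norm $\|\mu^N-\nu^N\|_{\dot H^{-1}}^2$, Poissonize, do a spectral (Fourier) variance computation, and then rigorize the upper bound by mollifying at a mesoscale and solving a Poisson problem, and the lower bound by Paley--Zygmund for a random trigonometric test function. This is essentially the program of \cite{CaLuPaSi14} and \cite{AmStTr16}, which the paper cites but deliberately does \emph{not} follow. The paper instead stays entirely combinatorial and pre-PDE: the only probabilistic input is Lemma \ref{lem:lemma1} (moments and concentration of the binomial variable $N_Q$), and the upper bound is obtained by \emph{explicitly constructing} a dyadic transport map $T=T_*\circ T_{k_*}\circ\cdots\circ T_1$ (Proposition \ref{prop:mapupper}), where each $T_k$ is a one-dimensional piecewise-affine rebalancing of a box into its two halves driven by the random ratio $\rho_-=2N_{Q_-}/N_Q$. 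The key telescoping estimate \eqref{eq:propproofbase} controls the cross term $\int (S_{k-1}-\id)\cdot(T_k-\id)\circ S_{k-1}\,\dd\rho_0$ not by a spectral argument but by conditioning on the $\sigma$-algebra $\mathcal{F}_{k-1}$ and exploiting the swap symmetry $\rho_-\leftrightarrow 2-\rho_-$, which makes $\EE[T_k-\id\,|\,\mathcal{F}_{k-1}]$ second order in $(\rho_--1)$ (Lemma \ref{lem:auxmap1d}, estimate \eqref{eq:auxmap2}); this replaces your orthogonality-in-frequency step. The lower bound likewise avoids Fourier modes and Paley--Zygmund: it uses the $L^1$-Kantorovich dual bound \eqref{eq:dualformL1} together with Cauchy--Schwarz in expectation, and the hierarchical test function $\Phi_{k_*}=\sum_k\phi_k$ of Proposition \ref{prop:badcandidatedualL1} is built from the \emph{actual} random density imbalances $\rho_--1$ at each scale (so $\int\phi_k\,\dd\rho_k$ is nonnegative by construction, no second-moment argument needed); the paper then honestly flags that upgrading $\sup_x\EE[|\nabla\Phi_{k_*}|^2]$ to $\EE[\sup_x|\nabla\Phi_{k_*}|^2]$ is left incomplete, referring to \cite{HMO21}. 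In short: your route buys a clean conceptual picture (white noise, $\dot H^{-1}$, spectral sum $\sum\lambda_k^{-1}$) at the price of a nontrivial rigorous linearization, which is exactly the obstacle you correctly identify; the paper's route sacrifices that conceptual picture but requires nothing beyond Chebyshev-type bounds on binomials and a careful choice of the one-dimensional building-block map, and handles all three dimensional regimes ($d=1,2,>2$) by the same dyadic iteration without a separate argument for $d=1$.
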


Here and in the sequel, we use the notation $A\lesssim B$ if there exists a finite constant $C$ only depending
on $d$, such that $A \le C B$. We write $A \sim B$ if both $A \lesssim B$ and $B \lesssim A$.
The critical dimension $2$ was firstly understood in the seminal work \cite{AKT84} and 
recently improved in \cite{AmStTr16} where the convergence of the rescaled cost was shown 
on the basis of a PDE ansatz made in \cite{CaLuPaSi14}. See also the monograph 
\cite[Chapter 4, Chapter 14, Chapter 15]{Ta14} where many aspects of the optimal matching problem are studied. 
In the special case of $d=1$ the left hand side of \eqref{eq:thmasymptotic} can be computed explicitly, i.\,e.
\[
\EE \bigg[\frac1N \min_{\sigma \in \mathcal{S}_N}\sum_{n=1}^N |Y_{\sigma(n)} - X_n|^2\bigg] = \frac1{3(N+1)},
\]
see the monograph \cite{BoLe19} for a detailed study of the optimal matching problem in dimension $1$.

\medskip
In establishing Theorem \ref{thm:AKTasymptotic}, we will rely
on the following elementary properties of the binomial distribution. 

\begin{lemma}\label{lem:lemma1}
Let $X_1, \dots, X_N$ be independent and uniformly distributed in $\Omega \subseteq \mathbb{R}^d$ with $|\Omega| < \infty$. For $Q \subset \Omega$ define the number of points that belongs to $Q$, i.\,e. 
\[
N_Q := \#\{ n \in \{0,\dots,N\} \ |\ X_n \in Q\},
\]
and let $\theta$ denote the volume fraction
\begin{equation}\label{eq:volfract}
\theta := \frac{|Q|}{|\Omega|} \in [0,1].
\end{equation}
Then the following properties hold
\begin{enumerate}[label=(\alph*)]
\item\label{eq:lemcomb} mean $\EE [N_Q] = N \theta$, \\
variance $\EE[(N_Q - \EE [N_Q])^2] = N \theta(1-\theta)$,\\
kurtosis $\EE [(N_Q - \EE[N_Q])^4] \le 3(N\theta(1-\theta))^2 + N \theta (1-\theta)$.

\item\label{eq:lemstoc} Concentration: provided $N\theta \ge 1$ \\
\[
(\EE[|\rho -1|^p])^\frac1p \lesssim \frac1{\sqrt{N\theta}} \quad\text{for $p \in \{2,4\}$}\footnote{with minor changes in the proof it can be shown that the statement holds for any $1<p<4$.} ,
\]
where $\rho:= \frac{N_Q}{\EE[N_Q]} = \frac{N_Q}{N\theta}$ so that $|\rho - 1|$ is the relative size of fluctuation. \\
\noindent
Stochastic $L^p$-norm bound:
\[
\bigg(\EE \bigg[ \bigg( I(N_Q \neq 0) \frac1{N_Q}\bigg)^2\bigg]\bigg)^\frac12 \lesssim \frac1{N\theta}.
\]
\end{enumerate}
\end{lemma}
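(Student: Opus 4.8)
The plan is to realize $N_Q$ as a sum of independent Bernoulli variables and reduce everything to moments of the binomial distribution. Write $\xi_n := I(X_n \in Q)$, so that $\xi_1,\dots,\xi_N$ are independent with $\PP(\xi_n = 1) = \theta$, $\PP(\xi_n = 0) = 1-\theta$, and $N_Q = \sum_{n=1}^N \xi_n$. For part~(a), the mean and variance are immediate from linearity and independence, using $\EE[\xi_n] = \theta$ and $\EE[(\xi_n - \theta)^2] = \theta(1-\theta)$. For the fourth central moment I would expand $\EE\big[(\sum_{n=1}^N(\xi_n-\theta))^4\big]$ as a sum over quadruples of indices; by independence and $\EE[\xi_n - \theta] = 0$, only the $N$ quadruples with all four indices equal and the $3N(N-1)$ quadruples splitting into two distinct equal pairs contribute, so the sum equals $N\,\EE[(\xi_1-\theta)^4] + 3N(N-1)\,(\theta(1-\theta))^2$. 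The bound in~(a) then follows from $3N(N-1) \le 3N^2$ together with the elementary estimate $\EE[(\xi_1-\theta)^4] = \theta(1-\theta)\big((1-\theta)^3 + \theta^3\big) \le \theta(1-\theta)$.

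For the concentration bounds in part~(b), observe that $|\rho - 1| = |N_Q - N\theta|/(N\theta)$, hence $\EE[|\rho-1|^p] = (N\theta)^{-p}\,\EE[|N_Q - N\theta|^p]$. For $p = 2$ the variance formula gives $\EE[|\rho-1|^2] = (1-\theta)/(N\theta) \le 1/(N\theta)$. For $p = 4$ the kurtosis bound from~(a) gives $\EE[|\rho-1|^4] \le \big(3(N\theta(1-\theta))^2 + N\theta(1-\theta)\big)/(N\theta)^4$, and here the hypothesis $N\theta \ge 1$ is precisely what is needed to absorb the second term via $N\theta(1-\theta)/(N\theta)^4 \le (N\theta)^{-2}$, so that $\EE[|\rho-1|^4] \lesssim (N\theta)^{-2}$. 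Taking $p$-th roots gives the claimed $(N\theta)^{-1/2}$ decay; the general $1 < p < 4$ in the footnote follows from the same computation with a $p$-th moment bound of the same shape, or by interpolation.

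The substantive point is the stochastic $L^2$-bound, whose difficulty is that $I(N_Q\neq 0)/N_Q$ equals $1$ as soon as $N_Q = 1$, i.e.\ it is \emph{not} small on the (rare) event that $N_Q$ is positive but far below its mean. I would therefore split according to whether $N_Q \ge N\theta/2$ or $1 \le N_Q < N\theta/2$. On the first event $I(N_Q\neq 0)/N_Q \le 2/(N\theta)$, so this part contributes at most $4/(N\theta)^2$ to $\EE[(I(N_Q\neq0)/N_Q)^2]$. On the second event one only has the crude bound $1/N_Q^2 \le 1$, but $N_Q < N\theta/2$ forces $|N_Q - N\theta| \ge N\theta/2$, so by Markov's inequality applied to the fourth power together with part~(a),
\[
\PP\big(1 \le N_Q < N\theta/2\big) \le \frac{2^4\,\EE[(N_Q - N\theta)^4]}{(N\theta)^4} \le \frac{2^4\big(3(N\theta(1-\theta))^2 + N\theta(1-\theta)\big)}{(N\theta)^4} \lesssim \frac{1}{(N\theta)^2},
\]
again using $N\theta \ge 1$ for the last step. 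Adding the two contributions yields $\EE[(I(N_Q\neq 0)/N_Q)^2] \lesssim (N\theta)^{-2}$, and taking square roots completes the proof. The only non-routine ingredient is this truncation argument, which converts the quantitative fourth-moment control from~(a) into control of the reciprocal of $N_Q$; everything else is bookkeeping with binomial moments.
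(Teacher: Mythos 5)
Your proof is correct and follows essentially the same route as the paper: Bernoulli decomposition for the moments in (a), reduction of (b) to (a) via $(N\theta)^{-p}\,\EE|N_Q-N\theta|^p$, and the same truncation at $N_Q \gtrless N\theta/2$ combined with a fourth-moment Chebyshev/Markov bound for the reciprocal estimate. The only cosmetic differences are that you bound $\EE[(\xi_1-\theta)^4]$ by the exact formula rather than by $|\xi_1-\theta|\le 1$, and you treat $p=2$ directly from the variance rather than via Jensen from $p=4$.
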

\begin{proof}
{\sc Proof of }\ref{eq:lemcomb}. By definition of $N_Q$ we may write
\begin{equation}\label{eq:lemNQexpl}
N_Q = \sum_{n=1}^N I(X_n \in Q) = \sum_{n=1}^N I_n,
\end{equation}
where we set $I_n : = I (X_n \in Q)$. Note that since the $X_n,\, n \in \{0,\dots,N\}$, are independent and uniformly distributed in $\Omega$, the random variables $I_n$ are independent with mean given by
\begin{equation}\label{eq:Intriangl}
\EE [I_n] = \P(X_n \in Q) \stackrel{\eqref{eq:volfract}}{=} \frac{|Q|}{|\Omega|} = \theta, 
\end{equation}
in particular, 
$
\EE[N_Q] \stackrel{\eqref{eq:lemNQexpl}}{=} \sum_{n=1}^N \EE[I_n] = N \theta.
$
Note that
\begin{equation}\label{eq:lemma1star}
N_Q - \EE[N_Q] = \sum_{n=1}^N (I_n - \theta),
\end{equation}
therefore taking the square we obtain
\begin{equation}\label{eq:lemvarianceNQpath}
(N_Q - \EE[N_Q])^2 = \sum_{n=1}^N \sum_{m=1}^N (I_n - \theta)(I_m - \theta).
\end{equation}
Thus, taking the expectation in \eqref{eq:lemvarianceNQpath} yields
\begin{equation}\label{eq:lemma1stella2}
\begin{split}
\EE[ (N_Q - \EE[N_Q])^2 ] &= \sum_{n=1}^N \sum_{m=1}^N \EE [(I_n - \theta)(I_m - \theta)].
\end{split}
\end{equation}
Note that the expectation on the right hand side of \eqref{eq:lemma1stella2} vanishes whenever $n\neq m$. Indeed, by independence
\[
\EE [(I_n - \theta)(I_m - \theta)] = \EE [(I_n - \theta)]\EE[(I_m - \theta)] \stackrel{\eqref{eq:Intriangl}}{=} 0.
\]
Hence we may write
\begin{equation}\label{eq:lemma1stella}
\begin{split}
\EE[ (N_Q - \EE[N_Q])^2 ] &=\sum_{n=1}^N \EE [(I_n - \theta)^2] \\
& = N \EE[(I_1 - \theta)^2]\\
& = N \EE[I_1] - 2 \theta \EE[I_1] + \theta^2 \stackrel{\eqref{eq:volfract}}{=} \theta (1-\theta).
\end{split}
\end{equation}

\medskip
Let us now turn to last item in \ref{eq:lemcomb}. Taking the fourth power, by \eqref{eq:lemNQexpl} we may write 
\[
\begin{split}
\EE [ (N_Q - \EE [N_Q])^4] & \stackrel{\eqref{eq:lemma1star}}{=} \sum_{n_1=1}^N \sum_{n_2=1}^N \sum_{n_3=1}^N \sum_{n_4=1}^N \EE[ (I_{n_1} - \theta) (I_{n_2} - \theta)  (I_{n_3} - \theta)  (I_{n_4} - \theta) ] 
\end{split}.
\]
Note that as in \eqref{eq:lemma1stella2} the right hand side is not null if the indexes are all equal or pairwise equal.  Indeed, arguing by independence as before all of the other cases vanish. Hence we may write
\[
\begin{split}
\EE [ (N_Q - \EE [N_Q])^4] & \stackrel{\eqref{eq:lemma1star}}{=} 3 \sum_{n_1 =1}^N \sum_{\substack{n_2=1\\ n_2 \neq n_1}}^N \EE[ (I_{n_1} - \theta)^2]\EE[(I_{n_2} - \theta)^2 ]+ \sum_{n=1}^N \mathbb{E}[(I_n - \theta)^4] \\
& = 3N(N-1)(\EE[(I_1 - \theta)^2])^2 + N \EE[(I_1 - \theta)^4]\\
& \le 3(N\EE[(I_1-\theta)^2])^2 + N \EE[(I_1 - \theta)^2]\\
& \stackrel{\eqref{eq:lemma1stella}}{=} 3(N\theta(1-\theta))^2 + N \theta(1-\theta),
\end{split}
\]
where in the inequality we used the fact that $|I_1 - \theta|\le 1$.

\medskip
{\sc Proof of }\ref{eq:lemstoc}. By Jensen's inequality $\EE [(\rho-1)^2] \le (\EE[(\rho-1)^4])^\frac12$; thus it is sufficient to show the case $p=4$. By definition of $\rho$ we may write
\[
\begin{split}
\EE[(\rho-1)^4] & = \frac1{(\EE[N_Q])^4}\EE[(N_Q - \EE[N_Q])^4] \\
&\stackrel{\ref{eq:lemcomb}}{\le} \frac1{(N\theta)^4}(3(N\theta(1-\theta))^2+N\theta(1-\theta)) \\
& \le 3\bigg( \frac{1-\theta}{N\theta} \bigg)^2 + \frac{1-\theta}{(N\theta)^3} \\
& \lesssim \frac1{(N\theta)^2},
\end{split}
\] 
where the last inequality holds since $N\theta \ge 1 \ge 1-\theta$. 

\medskip
Let us now turn to the last item of \ref{eq:lemstoc}. By definition of $N_Q$ we may write
\[
\begin{split}
\EE\bigg[\bigg(I(N_Q\neq 0)\frac1{N_Q}\bigg)^2\bigg] & = \EE\bigg[I(N_Q\ge 1)\frac1{N_Q^2}\bigg] \\
& = \EE\bigg[I(1\le N_Q\le \frac12\EE[N_Q])\frac1{N_Q^2}+ I(N_Q > \frac12\EE[N_Q])\frac1{N_Q^2}\bigg]\\
& \le \PP\big( N_Q \le \frac12 \EE[N_Q]\big) + \frac4{(\EE[N_Q])^2}.
\end{split}
\]
By applying Chebyshev's inequality in the form of
\[
\PP\big( N_Q \le \frac12 \EE[N_Q]\big) \le \frac{16}{(\EE[N_Q])^4}\EE[(N_Q - \EE[N_Q])^4],
\]
we may write
\[
\begin{split}
\EE\bigg[\bigg(I(N_Q\neq 0)\frac1{N_Q}\bigg)^2\bigg] & \le \frac{16}{(\EE[N_Q])^4}\EE[(N_Q - \EE[N_Q])^4]  + \frac4{(\EE[N_Q])^2} \\
& \stackrel{\ref{eq:lemcomb}}{\le} \frac{16}{(N\theta)^4}(3(N\theta(1-\theta))^2+N\theta(1-\theta))  + \frac4{(\EE[N_Q])^2} \\
& \lesssim \frac1{(N\theta)^2},
\end{split}
\]
where the last inequality holds since $N\theta \ge 1\ge 1-\theta$.
\end{proof}

\section{Upper Bound}
In this section we show the upper bound part of Theorem \ref{thm:AKTasymptotic}.
Our intermediate goal is to consider a semidiscrete version of \eqref{eq:matchingprob}, i.\,e. to compare the empirical measure $\frac1N \sum_{n=1}^N \delta_{X_n}$ and the uniform measure $\lambda = \frac1{L^d}\,\mathrm{d}x\mres [0,L]^d$. Given a point cloud $\{X_n\}_{n=1}^N$ we will construct (thanks to an algorithm) a map 
\[
T: [0,L]^d \rightarrow \{ X_n\}_{n=1}^N \quad \text{with} \quad |T^{-1}(X_n)| = \frac{L^d}N,
\]
which is close to the identity in a square averaged sense, that is
\begin{equation}\label{eq:uppmapT}
\int_{[0,L]^d} |T - \id|^2 \, \mathrm{d} \lambda = \frac 1 {L^d} \int_{[0,L]^d} |T(x)-x|^2 \, \mathrm{d} x \quad \text{is small.}
\end{equation}

\begin{figure}[ht]
\center
\includegraphics[scale=0.75]{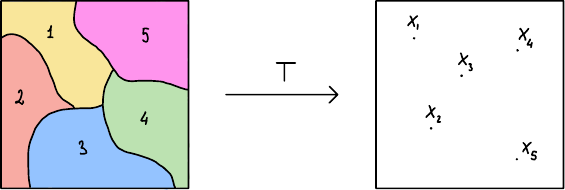}
\caption{The map $T$.}
\label{fig:mapT}
\end{figure}
\noindent
Note that $\{T^{-1}(X_n)\}_{n=1}^N$ amounts to an equipartition of $[0,L]^d$ and $T$ associates each cell $T^{-1}(X_n)$ to a point $X_n$ as in Figure \ref{fig:mapT}. Note that in general $X_n$ does not belong to $T^{-1}(X_n)$ for $N \gg 1$. In  mathematical language we say that $T_\# \lambda$ is the \emph{push forward} (in symbols $T_\# \lambda = \frac1N \sum_{n=1}^N \delta_{X_n}$) of the uniform measure to the empirical measure, which amounts to
\begin{equation}\label{eq:def_pushforward}
\lambda(T^{-1}(A)) = \bigg(\frac 1N \sum_{n=1}^N \delta_{X_n}\bigg)(A) \quad \text{for every Borel set $A \subseteq [0,L]^d$.}
\end{equation}

\medskip
Moreover, as shown in the next proposition, we can construct the map $T$ in such a way that the quantity \eqref{eq:uppmapT} is bounded from above by the right hand side of \eqref{eq:thmasymptotic}.
%
\begin{proposition}\label{prop:mapupper}
Let $X_1, \dots, X_N$ and $r$ be as in Theorem \ref{thm:AKTasymptotic}. There exists a random map $T:[0,L]^d \rightarrow [0,L]^d$  such that its preimage satisfies $|T^{-1}(X_i)| = L^d N^{-1}$ and its $L^2$ norm satisfies the upper bound
\begin{equation}\label{eq:propmapbound}
\bigg(\EE \bigg[ \frac1{L^d} \int_{[0,L]^d} |T - \id|^2 \bigg]\bigg)^\frac12 \lesssim 
r  \begin{cases}
\sqrt{N} & \text{if $d=1$},\\
\sqrt{\ln N} & \text{if $d=2$}, \\
1 & \text{if $d>2$}.
\end{cases}
\end{equation}
\end{proposition}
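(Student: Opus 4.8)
The plan is to construct $T$ by a hierarchical (dyadic) scheme on $[0,L]^d$, matching mass to points scale by scale, and to control the error at each scale using the concentration estimates of Lemma~\ref{lem:lemma1}. Concretely, assume for simplicity that $N=2^{dK}$ is a perfect dyadic power (the general case follows by a harmless modification), and consider the nested dyadic partitions of $[0,L]^d$ into cubes of sidelength $L2^{-k}$, $k=0,\dots,K$, so that the finest cubes $Q$ at level $K$ have volume $L^d/N = r^d$ and satisfy $\EE[N_Q]=1$. At the finest scale we want each cube to contain exactly one point; since the empirical count $N_Q$ fluctuates, the scheme must \emph{transport} the discrepancy between the actual counts and the target counts up and down the tree. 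I would define, for each level $k$ and each level-$k$ cube $Q$, the signed deficiency $D_Q := N_Q - \theta_k N$ where $\theta_k = 2^{-dk}$ is the volume fraction of a level-$k$ cube, and build $T$ so that on each level-$k$ cube it redistributes a volume of order $|D_Q|\, r^d$ across the dyadic neighborhood of $Q$.

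The key steps, in order, are: (i) fix the dyadic structure and the target masses $r^d$ per finest cube; (ii) define a ``stochastic flux'' across each interface of the dyadic tree equal to the amount of mass that must cross it to reconcile local counts with local targets, and check that these fluxes are consistent (a discrete divergence condition) so that a map $T$ realizing them exists with $|T^{-1}(X_n)|=r^d$; (iii) estimate $\int|T-\id|^2$ by summing, over scales $k$ and over level-$k$ cubes $Q$, a contribution of order $(L2^{-k})^2 \times (\text{volume moved at scale } k \text{ near } Q)$, which is $\les (L2^{-k})^2\, r^d\, \EE[|D_Q|]$ type terms; (iv) invoke Lemma~\ref{lem:lemma1}\ref{eq:lemstoc} with $\theta=\theta_k$ to get $\EE[|D_Q|^2] = \EE[(N_Q-\EE N_Q)^2] = N\theta_k(1-\theta_k)\les N\theta_k$, hence $(\EE[|D_Q|^2])^{1/2}\les (N\theta_k)^{1/2} = 2^{-dk/2}\sqrt N$; (v) sum the resulting bound $\EE[\frac1{L^d}\int|T-\id|^2] \les \sum_{k=0}^{K} (L2^{-k})^2 \cdot 2^{-dk} \cdot 2^{dk} \cdot 2^{-dk/2}\sqrt N \cdot (\text{number of level-}k\text{ cubes})/L^d$, and observe that, after collecting the $2$-powers, each scale contributes $\sim r^2$ (for $d\le 2$, up to the counting/variance bookkeeping) while for $d>2$ the geometric series in $k$ converges; carefully tracking the exponents yields exactly the three regimes $r^2 N$, $r^2\ln N$, $r^2$ on the right-hand side of \eqref{eq:propmapbound}, the $\ln N = K\ln 2$ in $d=2$ coming from the $K{+}1$ scales each contributing $\sim r^2$.

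The main obstacle will be step (ii)–(iii): making the hierarchical construction \emph{actually produce a measurable map} $T$ with the exact preimage volumes $|T^{-1}(X_n)|=r^d$, rather than merely a coupling, and doing the bookkeeping of how much volume is displaced at each scale so that the geometric sum is genuinely controlled by the variance bound. One clean way is to define $T$ inductively from the coarsest scale down: having partitioned $[0,L]^d$ into pieces assigned to each level-$k$ cube with the correct total volume $N_Q r^d$, subdivide each such piece among the $2^d$ children according to their counts, transporting the level-$k$ deficiencies by a fixed local rule (e.g.\ moving the excess mass along a spanning path through the $2^d$ children, with displacement $\les L2^{-k}$). Each reassignment moves volume $\les r^d |D_{Q'}|$ a distance $\les L2^{-k}$, which is exactly what feeds the estimate. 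A secondary technical point is handling cubes at the finest scale with $N_Q=0$ or $N_Q\ge 2$ (no point, or several): the hierarchical fluxes already guarantee that after all reassignments every finest cell carries total volume $r^d$ mapped to a single $X_n$, but one should note that the event $N_Q=0$ is where the last bound of Lemma~\ref{lem:lemma1}\ref{eq:lemstoc} is the relevant input, and check $N\theta_K=1$ so the hypothesis $N\theta\ge1$ of that lemma is met at every scale.
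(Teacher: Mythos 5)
Your high-level plan (dyadic hierarchy, transport scale by scale, control fluctuations with Lemma~\ref{lem:lemma1}) is the same as the paper's, but the two estimates that actually close the argument are either missing or wrong, and in dimension $d\le 2$ the bound you would obtain is off by a power of $N$.

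\medskip
\emph{The local transport rule is suboptimal.} You propose, at scale $k$, to ``move the excess mass along a spanning path through the $2^d$ children, with displacement $\lesssim L2^{-k}$,'' i.e.\ move a volume $\sim |D_{Q'}|r^d$ a distance $\sim L_k$. The squared cost of this, per cube, is $\sim L_k^2\,|D_{Q'}|\,r^d$, and after taking expectations with $\EE|D_{Q'}|\lesssim (N\theta_k)^{1/2}$ the per-scale contribution to $\EE\big[\frac1{L^d}\int|T-\id|^2\big]$ comes out $\sim L_k^{2-d}\,r^d\,\sqrt{N\theta_k}$, which is $\sqrt{N\theta_k}$ \emph{larger} than what is needed. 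Summing over $k$ this gives, for $d=2$, a bound $\sim r^2\sqrt N$ instead of the target $r^2\ln N$, and for $d=1$ a bound $\sim r^2 N^{3/2}$ instead of $r^2 N$. The point is that for the quadratic cost it is much cheaper to \emph{spread} the imbalance: the paper's map $T_{\rho_-}$ of Lemma~\ref{lem:auxmap1d} rescales the whole cube so that every point moves by $\lesssim L_k|\rho_--1|\sim L_k/\sqrt{N_Q}$, giving a per-cube squared cost $\sim L_k^2(\rho_--1)^2$ weighted by the cube's mass, i.e.\ $\sim L_k^2/N$ in expectation (see~\eqref{eq:propsigmaalg}). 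Your ``move the excess a distance $L_k$'' variant is worse precisely by the factor $\sqrt{N_Q}$, which ruins the estimate at all coarse scales.

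\medskip
\emph{Orthogonality across scales is assumed but not proved.} Writing $T-\id$ as a sum of scale-by-scale displacements $D_k$, the identity $\EE\int|T-\id|^2\dd\rho_0 = \sum_k\EE\int|D_k|^2\dd\rho_0$ requires the cross terms $\EE\int D_k\cdot D_{k'}\dd\rho_0$ to be negligible. This is not automatic and is exactly where the paper has to work hardest (Step~2.2 of the proof of Proposition~\ref{prop:mapupper}): conditioning on $\mathcal{F}_{k-1}$ one sees that $\EE[T_k-\id\,|\,\mathcal{F}_{k-1}]$ does \emph{not} vanish, and the naive Cauchy--Schwarz only gives a $\ln N$-loss in $d=2$ (it would yield $r^2(\ln N)^2$ instead of $r^2\ln N$). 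The paper rescues this by the left-right symmetry argument: $\rho_-$ and $2-\rho_-$ have the same conditional law, so $\EE[T_{\rho_-}-\id\,|\,\mathcal{F}_{k-1}]$ can be replaced by the symmetrization $\frac12(T_{\rho_-}-\id)+\frac12(T_{2-\rho_-}-\id)$, which by~\eqref{eq:auxmap2} is $O((\rho_--1)^2)$ rather than $O(|\rho_--1|)$. This quadratic gain is precisely what makes the recursion~\eqref{eq:propproofbase} close. Your proposal contains neither the recursive inequality nor the symmetry/Jensen step, so even if you corrected the per-scale estimate you would still not reach the stated bound.
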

\begin{proof}[Proof of the upper bound of Theorem \ref{thm:AKTasymptotic}]
{\sc Step 1.} Kantorovich's relaxation. We argue that the following equality holds
\begin{equation}\label{eq:MontoKant}
\begin{split}
\min_{\sigma \in \mathcal{S}_N} \sum_{n=1}^N |Y_{\sigma(n)} - X_n|^2 = \min_{\pi \in B_N} \frac1N \sum_{n=1}^N \sum_{m=1}^N |Y_m - X_n|^2 \pi_{nm}
\end{split}
\end{equation}
where $B_N$ denotes the set of bistochastic\footnote{also called doubly stochastic} matrices (sometimes called Birkhoff polytope), namely the set
\begin{equation}\label{eq:Birkhoffpolytope}
B_N = \big\{ \pi := (\pi_{nm})_{n,m=1}^N \in \mathbb{R}^{N\times N} \ | \ \pi_{nm}\ge 0, \sum_{n=1}^N \pi_{nm} =1, \sum_{m=1}^N \pi_{nm} =1 \big\}.
\end{equation}
Indeed, the map
\[
(\pi_{n,m})_{n,m=1}^N \mapsto \frac1N \sum_{n=1}^N\sum_{m=1}^N |Y_m - X_n|^2 \pi_{nm},
\]
is linear and the set $B_N$ is convex. Hence the minimum on the right hand side of \eqref{eq:MontoKant} is attained at the extremal points of $B_N$. Finally, the Birkhoff-von Neumann Theorem states that the extremal points of $B_N$ correspond to permutation matrices, i.\,e. $\pi_{nm} = \delta_{\sigma(n)m}$ for some $\sigma \in S_N$, where $\delta_{ij}$ denotes the Kronecker delta.  To be self contained, we briefly reproduce an easy argument for the Birkhoff-von Neumann Theorem that can be found in \cite{Hurlbert2012ASP}. Let $Q$ be an extremal point of $B_N$, we claim that $Q$ is a permutation matrix. By the constraints in \eqref{eq:Birkhoffpolytope} it is enough to show that $Q$ has integer entries (in particular they are $0$ or $1$). We claim that if the extremal point $Q$ has non integer entries it is the midpoint of a segment line contained in $B_N$. Let us suppose that there exists a couple of indexes $(n_1,m_1)$ such that $0<Q_{n_1, m_1}<1$. By the column constraint $\sum_{n=1}^N Q_{n,m} = 1$ there exists an index $n_2$ such that $0<Q_{n_2, m_1}<1$, likewise because of the row constraint $\sum_{m=1}^N Q_{n,m} = 1$ there exists an index $m_2$ such that $0<Q_{n_2, m_2}<1$. We iterate this process and we stop when a couple of indexes $(n_i,m_i)$ for some $i$ is repeated. Moreover, we may consider the shortest sequence of such indexes so that the final index is $(n_1,m_1)$, see Figure \ref{fig:BirkVonFig}. It is not hard to check that the resulting  sequence $\{(n_i,m_j) \, : \, j-i \le 1\}_{i,j=1}^N$ is even, namely there exist an index $k$ such that $(n_k, m_k) = (n_1, m_1)$.
Indeed, if that is not the case the indexes $(n_{k+1}, m_k)$, $(n_1, m_1)$ and $(n_2, m_1)$ belong to the same column. By deleting $(n_2, s_1)$ and starting at $(n_2, m_2)$ we obtain an admissible shorter sequence.
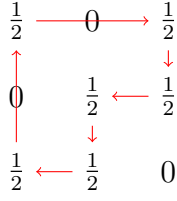
\begin{figure}[ht]
\begin{tikzpicture}[node distance=2cm]
\node (A) at (0, 0) {$\frac12$};
\node (B) at (0, 1) {0};
\node (C) at (0, 2) {$\frac12$};
\node (D) at (1, 0) {$\frac12$};
\node (E) at (1, 1) {$\frac12$};
\node (F) at (1, 2) {0};
\node (G) at (2, 0) {0};
\node (H) at (2, 1) {$\frac12$};
\node (I) at (2, 2) {$\frac12$};

\draw[red, ->]
  (H) edge (E) (E) edge (D) (D) edge (A) (A) edge (C) (C) edge (I) (I) edge (H);

\end{tikzpicture}
\caption{A possible sequence $\{n_i,m_j\}$.}
\label{fig:BirkVonFig}
\end{figure}
Given such a sequence let $\varepsilon_0 = \min \{ Q_{n_i,m_j} \} >0$. For any $0<\varepsilon<\varepsilon_0$ define $Q^+(\varepsilon)$ (respectively $Q^-(\varepsilon)$) by increasing (respectively decreasing) the value of each $Q_{n_i,m_i}$ by $\varepsilon$, while decreasing (respectively increasing) the value of each $Q_{n_i,m_{i+1}}$ by $\varepsilon$. Finally, by definition $Q^+(\varepsilon)$ and $Q^-(\varepsilon)$ belong to $B_N$, the line segment connecting $Q^+(\varepsilon)$ and $Q^-(\varepsilon)$ is contained in $B_N$ and has $Q$ as its midpoint.

\medskip
{\sc Step 2.} Triangle inequality. Let $T,S:[0,L]^d \rightarrow [0,L]^d$ be maps that satisfy \eqref{eq:propmapbound} for $(X_n)_{n=1}^N$ and $(Y_m)_{m=1}^N$ respectively. We claim that 
\begin{equation}\label{eq:thmtriangleupp}
\begin{split}
\lefteqn{\bigg( \EE\bigg[ \min_{\pi \in B_N} \frac1N \sum_{n=1}^N \sum_{m=1}^N |Y_m - X_n|^2 \pi_{nm}\bigg]\bigg)^\frac12} \\
& \le \bigg( \EE\bigg[\frac1{L^d}\int_{[0,L]^d}|S- \id|^2\bigg]\bigg)^\frac12 + \bigg(\EE\bigg[\frac1{L^d}\int_{[0,L]^d}|T - \id|^2\bigg]\bigg)^\frac12.
\end{split}
\end{equation}
Indeed, define the matrix $\bar{\pi} = (\bar{\pi}_{nm})_{n,m=1}^N$ with entries given by
\begin{equation}\label{eq:thmmatrixdef}
\bar{\pi}_{nm} := \frac N{L^d}|T^{-1}(X_n) \cap S^{-1}(Y_m)|.
\end{equation}
By construction $\bar{\pi}$ is bistochastic. Indeed, $(T^{-1}(X_n))_{n=1}^N$ and $(S^{-1}(Y_m))_{m=1}^N$ provide two partitions of $[0,L]^d$ and each cell has total mass given by ${L^d}N^{-1}$. Therefore for every column $m$ we have $\sum_{n=1}^N \bar{\pi}_{n,m} = N L^{-d} |S^{-1}(Y_m)|=1$, and similarly for the rows. Furthermore, 
\begin{equation}\label{eq:candtriangleupp}
\bigg( \frac1N \sum_{n=1}^N \sum_{m=1}^N |Y_m - X_n|^2 \bar{\pi}_{nm}\bigg)^\frac12  \le \bigg( \frac1{L^d}\int_{[0,L]^d}|S- \id|^2\bigg)^\frac12 + \bigg(\frac1{L^d}\int_{[0,L]^d}|T - \id|^2\bigg)^\frac12.
\end{equation}
Indeed, by the triangle inequality
\[
\begin{split}
\lefteqn{
\bigg( \frac1N \sum_{n=1}^N \sum_{m=1}^N |Y_m - X_n|^2 \bar{\pi}_{nm}\bigg)^\frac12
}\\
& \stackrel{\eqref{eq:thmmatrixdef}}{=} \bigg( \frac1{L^d} \sum_{n=1}^N \sum_{m=1}^N |Y_m - X_n|^2 |T^{-1}(X_n) \cap S^{-1}(Y_m)|\bigg)^\frac12 \\
& = \bigg( \frac1{L^d} \sum_{n=1}^N \sum_{m=1}^N \int_{T^{-1}(X_n)\cap S^{-1}(Y_m)} |(Y_m - \id) - (X_n - \id)|^2 \bigg)^\frac12 \\
& \le \bigg( \frac1{L^d} \sum_{n=1}^N \sum_{m=1}^N \int_{T^{-1}(X_n)\cap S^{-1}(Y_m)} |Y_m - \id|^2\bigg)^\frac12 \\
& + \bigg( \frac1{L^d} \sum_{n=1}^N \sum_{m=1}^N \int_{T^{-1}(X_n)\cap S^{-1}(Y_m)} |X_n - \id|^2 \bigg)^\frac12\\
& = \bigg(\sum_{m=1}^N \int_{S^{-1}(Y_m)}|Y_m - \id|^2\bigg)^\frac12 + \bigg(\sum_{n=1}^N \int_{T^{-1}(X_n)}|X_n - \id|^2\bigg)^\frac12\\
& =\bigg( \frac1{L^d}\int_{[0,L]^d}|S- \id|^2\bigg)^\frac12 + \bigg(\frac1{L^d}\int_{[0,L]^d}|T - \id|^2\bigg)^\frac12.
\end{split}
\]
Taking the expectation in \eqref{eq:candtriangleupp} and the triangle inequality w.\,r.\,t. to $(\EE[|\cdot|])^\frac12$ implies \eqref{eq:thmtriangleupp}. 

\medskip
{\sc Step 3.} Conclusion. Finally, \eqref{eq:MontoKant}, \eqref{eq:thmtriangleupp} and \eqref{eq:propmapbound} combine to the upper bound in \eqref{eq:thmasymptotic}. 
\end{proof}

We now turn to the proof of Proposition \ref{prop:mapupper}. We start by introducing the building block for the map $T$ that attains the desired upper bound, see Figure \ref{fig:auxmap1d}. 
\begin{lemma}\label{lem:auxmap1d}
Let $\rho_- \in [0,2]$, define the measure
\begin{equation}\label{eq:defrhomeas}
\dd \rho = \rho_-\dd x\mres[0,1] + (2-\rho_-)\dd x\mres[1,2],
\end{equation}
and let $T_{\rho_-}:[0,2] \rightarrow [0,2]$ be the map
\[
T_{\rho_-}(x) :=\footnote{with the understanding that 
\[
T_0 (x) := 
\begin{cases}
1 & \text{for $x = 0$}, \\
1 + \frac x2 & \text{for $x > 0$},
\end{cases}
\quad \text{and} \quad
T_2 := \frac x 2 \quad \text{for $x \le 2$}.
\]
}
\begin{cases}
\frac x {\rho_-} & \text{for $x\le \rho_-$}, \\
2 - \frac{2-x}{2-\rho_-} & \text{for $x > \rho_-$}.
\end{cases}
\]
Then $\rho$ is the push-forward of the Lebesgue measure under $T_{\rho_-}$, in symbols $\dd \rho = (T_{\rho_-})_\# \dd x$, and the map $T_{\rho_-}$ satisfies the estimates
\begin{equation}\label{eq:auxmap1}
\int_{[0,2]} (T_{\rho_-} - \id)^2 \lesssim (\rho_- - 1)^2,
\end{equation}
and
\begin{equation}\label{eq:auxmap2}
\int_{[0,2]} \bigg(\frac12(T_{\rho_-} - \id)+\frac12(T_{2-\rho_-} - \id)\bigg)^2 \lesssim (\rho_- - 1)^4.
\end{equation}
\end{lemma}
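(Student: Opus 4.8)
The first task is to verify the push-forward claim $\dd\rho=(T_{\rho_-})_\#\dd x$. Since $T_{\rho_-}$ is piecewise affine and increasing, this reduces to a change-of-variables check on each of the two pieces: on $[0,\rho_-]$ the map $x\mapsto x/\rho_-$ sends Lebesgue measure to $\rho_-\dd x$ on $[0,1]$, and on $[\rho_-,2]$ the map $x\mapsto 2-(2-x)/(2-\rho_-)$ sends Lebesgue measure to $(2-\rho_-)\dd x$ on $[1,2]$; the degenerate endpoints $\rho_-\in\{0,2\}$ are covered by the footnote convention. This part is routine and I would dispatch it in a sentence or two.

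For the estimate \eqref{eq:auxmap1}, the plan is to exploit that $T_{\rho_-}$ is affine with fixed endpoints $T_{\rho_-}(0)=0$, $T_{\rho_-}(2)=2$, and interior value $T_{\rho_-}(\rho_-)=1$. Writing $g(x):=T_{\rho_-}(x)-x$, one has $g(0)=g(2)=0$ and $g(\rho_-)=1-\rho_-$, with $g$ affine on $[0,\rho_-]$ and on $[\rho_-,2]$. Hence $|g|\le|1-\rho_-|$ pointwise, so $\int_{[0,2]}g^2\le 2(1-\rho_-)^2$, which is \eqref{eq:auxmap1}. One can also compute the integral exactly — it is a sum of two integrals of squared linear functions, giving $\tfrac{\rho_-}{3}(1-\rho_-)^2+\tfrac{2-\rho_-}{3}(1-\rho_-)^2=\tfrac23(1-\rho_-)^2$ — but the crude bound suffices.

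The estimate \eqref{eq:auxmap2} is the only substantive point: here the linear-in-$(\rho_--1)$ error must cancel at leading order when one symmetrizes $T_{\rho_-}$ with $T_{2-\rho_-}$, leaving a quadratic remainder, hence a quartic bound after squaring and integrating. The plan is to set $\delta:=\rho_--1\in[-1,1]$ and write out $h(x):=\tfrac12(T_{1+\delta}(x)-x)+\tfrac12(T_{1-\delta}(x)-x)$ piecewise. On the "safe" region where both $x\le 1+\delta$ and $x\ge 1-\delta$ are in the relevant linear piece — i.e. for $x$ away from the interval between $1-|\delta|$ and $1+|\delta|$ — the two affine expressions are explicit rational functions of $\delta$, and a direct expansion shows the $O(\delta)$ terms cancel, leaving $h(x)=O(\delta^2)$ uniformly. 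For instance, on $[0,1-|\delta|]$ one has $h(x)=\tfrac{x}{2}\big(\tfrac1{1+\delta}+\tfrac1{1-\delta}\big)-x=\tfrac{x}{2}\cdot\tfrac{2}{1-\delta^2}-x=x\cdot\tfrac{\delta^2}{1-\delta^2}$, which is $O(\delta^2)$; the region $[1+|\delta|,2]$ is handled symmetrically using the decreasing-piece formula. The remaining "transition" region has length $2|\delta|$, and there one only has the crude bound $|h(x)|\lesssim 1$ from \eqref{eq:auxmap1}-type pointwise control on each summand; its contribution to $\int h^2$ is therefore $\lesssim|\delta|\cdot 1$, which is not yet $O(\delta^4)$. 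The fix — and the main technical obstacle — is to note that on the transition region $h$ is not merely $O(1)$ but $O(|\delta|)$: each of $T_{1+\delta}(x)-x$ and $T_{1-\delta}(x)-x$ is within $O(|\delta|)$ of its value at the nearer endpoint of the transition interval (where the safe-region estimate gives $O(\delta^2)$), because the map's slope is bounded on $[1-|\delta|,1+|\delta|]$ by a constant (here $\tfrac1{1-|\delta|}\le 2$ for $|\delta|\le\tfrac12$, and the endpoint cases $|\delta|\to 1$ are again covered by the footnote) and the interval has length $O(|\delta|)$. Thus $|h(x)|\lesssim|\delta|$ on a set of measure $O(|\delta|)$, contributing $\lesssim|\delta|^3$; combined with the $O(\delta^4)$ from the safe region, and absorbing $|\delta|^3$ into $|\delta|^4$ is \emph{not} legitimate — so instead one sharpens the transition estimate to $|h(x)|\lesssim\delta^2$ there as well, which follows because the leading-order cancellation is a pointwise identity for the slopes, not just for the endpoint values: the derivative $h'(x)$ is itself $O(\delta^2)$ on each linear piece (by the same expansion $\tfrac1{1+\delta}+\tfrac1{1-\delta}-2=\tfrac{2\delta^2}{1-\delta^2}$, and analogously for the decreasing piece), and since $h$ vanishes or is $O(\delta^2)$ at the piece's boundary, integrating $h'$ over a piece of length $\le 2$ gives $|h|\lesssim\delta^2$ everywhere on $[0,2]$. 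Squaring and integrating over the bounded interval $[0,2]$ then yields $\int_{[0,2]}h^2\lesssim\delta^4=(\rho_--1)^4$, which is \eqref{eq:auxmap2}.

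In summary, the whole lemma rests on the observation that symmetrizing in $\rho_-\mapsto 2-\rho_-$ kills the linear part of the displacement; once one sees that this cancellation holds at the level of the (piecewise constant) derivative $h'$, the quartic bound drops out by integrating a uniformly $O(\delta^2)$ derivative over the fixed-length domain $[0,2]$. I expect the bookkeeping of the degenerate endpoints $\rho_-\in\{0,2\}$ (and $\delta=\pm1$) via the stated footnote conventions to be the only mildly delicate point beyond this.
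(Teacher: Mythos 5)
Your plan for the push-forward claim and for \eqref{eq:auxmap1} is sound (and the exact constant $\tfrac23$ is correct). For \eqref{eq:auxmap2} the overall strategy — splitting $[0,2]$ at the two kinks $1\pm|\delta|$ into two ``safe'' outer pieces and one short transition piece, showing $|h|\lesssim\delta^2$ on each — does work and is a more explicit, piecewise version of what the paper does. The paper instead Taylor-expands $1/\rho_-$ and $1/(2-\rho_-)$ to get the single global formula $T_{\rho_-}(x)-x=(1-\rho_-)\min(x,2-x)+O((\rho_--1)^2)$ and then reads \eqref{eq:auxmap2} off the antisymmetry $T_{2-\rho_-}-\id=-(T_{\rho_-}-\id)+O((\rho_--1)^2)$; this is shorter, at the cost of hiding the transition-region bookkeeping inside the $O$-notation (it works there because $|1-x|\le|1-\rho_-|$ on the transition interval).

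However, there is a genuine error in your argument on the transition piece $[1-|\delta|,1+|\delta|]$. You assert that ``$h'(x)$ is itself $O(\delta^2)$ on each linear piece'' by appealing to $\tfrac1{1+\delta}+\tfrac1{1-\delta}-2=\tfrac{2\delta^2}{1-\delta^2}$. That identity governs only the two outer pieces, where $T_{1+\delta}$ and $T_{1-\delta}$ are simultaneously on their first (or simultaneously on their second) affine branch. On the transition piece they are on \emph{opposite} branches: for $\delta>0$ one has $T_{1+\delta}$ with slope $\tfrac1{1+\delta}$ and $T_{1-\delta}$ with slope $\tfrac1{2-(1-\delta)}=\tfrac1{1+\delta}$, so
\[
h'(x)=\tfrac12\Bigl(\tfrac1{1+\delta}-1\Bigr)+\tfrac12\Bigl(\tfrac1{1+\delta}-1\Bigr)=-\tfrac{\delta}{1+\delta},
\]
which is $O(\delta)$, not $O(\delta^2)$. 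Integrating an $O(\delta)$ derivative over ``a piece of length $\le 2$'' as you propose would only give $|h|\lesssim|\delta|$ there, which, over a region of length $O(|\delta|)$, yields the $O(|\delta|^3)$ contribution you had already — correctly — rejected.

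The conclusion $|h|\lesssim\delta^2$ on the transition piece is nevertheless true, but the correct reason is the combination of the two facts you already had in hand separately: the piece has length $2|\delta|$, the slope there is $O(|\delta|)$, and at the left kink $1-|\delta|$ your safe-region computation gives $h(1-|\delta|)=O(\delta^2)$. Integrating $h'$ from that endpoint across the short piece gives $|h|\le O(\delta^2)+O(|\delta|)\cdot 2|\delta|=O(\delta^2)$. (Indeed, a direct computation gives $h(x)=-(x-1)\,\delta/(1+\delta)$ on that piece, so $|h|\le\delta^2/(1+\delta)$.) With this repair, your piecewise proof of \eqref{eq:auxmap2} is complete and correct.
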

%
%
%
%
%
\begin{proof}
Note that by definition both of the items in \eqref{eq:auxmap1} and \eqref{eq:auxmap2} are $\lesssim 1$, thus w.\,l.\,o.\,g. we may assume that $|\rho_- - 1| \ll 1$. Hence, 
\[
\frac1{\rho_-} = \frac1{1+(\rho_- - 1)} = 1 - (\rho_- - 1) + O((\rho_- - 1)^2) = 2 - \rho_- + O((\rho_- - 1)^2)
\]
and
\[
\frac1{2-\rho_-} = \frac1{1-(\rho_- - 1)} = 1 + (\rho_- - 1) + O ((\rho_- - 1)^2) = \rho_- + O((\rho_- - 1)^2). 
\]
By definition of $T_{\rho_-}$ the last two imply
\[
T_{\rho_-}(x) = 
\left\{\begin{array}{lr}
(2-\rho_-) x & \text{for $x \le 1$}\\
2 - \rho_-(2-x) & \text{for $x >1$}
\end{array}\right\}
+ O((\rho_- - 1)^2).
\]
\begin{figure}[ht]
\center
\includegraphics[scale=0.75]{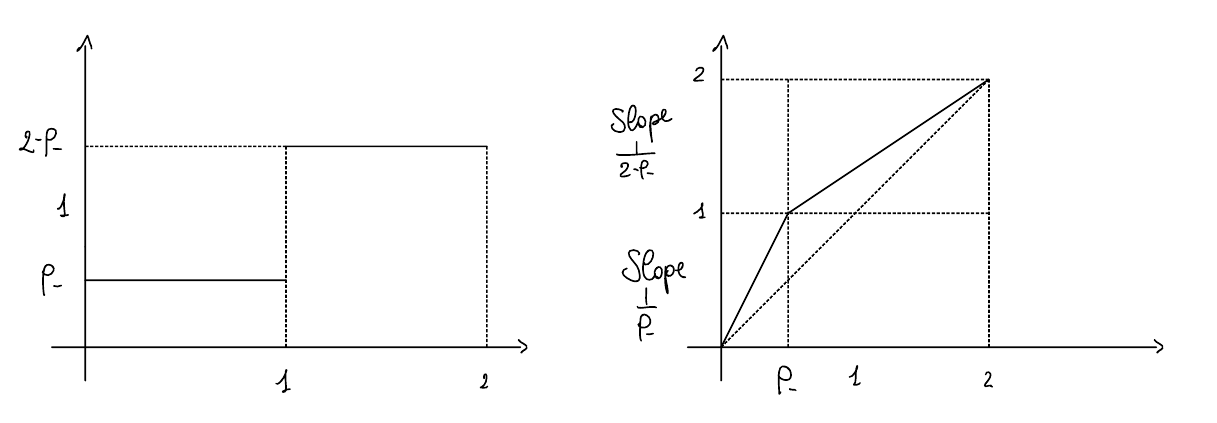}
\caption{Graph of $\rho_-$ and $T_{\rho_-}$.}
\label{fig:auxmap1d}
\end{figure}
For the displacement we thus obtain
\[
T_{\rho_-}(x) - x = 
(1-\rho_-)
\left\{\begin{array}{lr}
 x & \text{for $x \le 1$}\\
2-x & \text{for $x >1$}
\end{array}\right\}
+ O((\rho_- - 1)^2).
\]
Therefore, taking the square
\[
(T_{\rho_-}(x) - x)^2 = O((\rho_- - 1)^2)
\]
and \eqref{eq:auxmap1} is established. Finally, \eqref{eq:auxmap2} follows easily by the observation $1-(2-\rho_-) = -(1-\rho_-)$ which implies
\[
T_{2-\rho_-}(x) - x = - (T_{\rho_-} (x) - x) + O((\rho_- - 1)^2).
\]
\end{proof}
We are now in a position to prove Proposition \ref{prop:mapupper}.
\begin{proof}[Proof of Proposition \ref{prop:mapupper}]
{\sc Step 1.} The construction of $T$. \textbf{The boxes $Q$.} Consider the family of nested boxes $\{Q\}$ defined by the dyadic decomposition as in Figure \ref{fig:dyadicdecompos}.
\begin{figure}[ht]
\center
\includegraphics[scale=0.75]{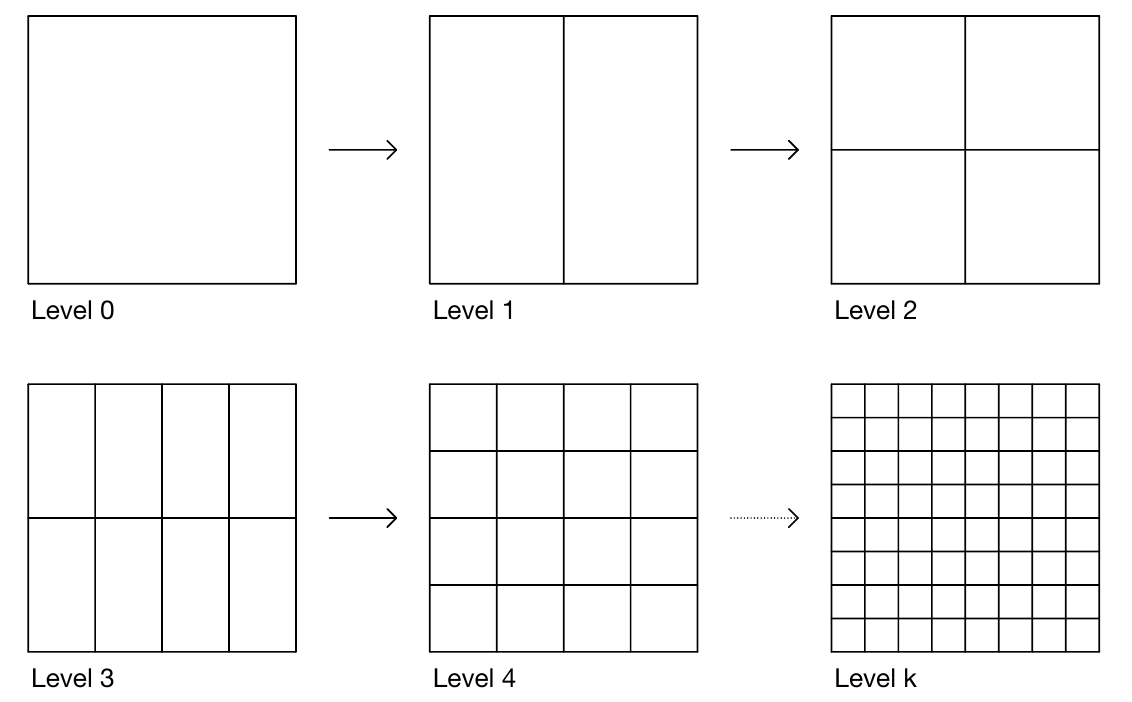}
\caption{Dyadic decomposition.}
\label{fig:dyadicdecompos}
\end{figure}
Clearly by construction the number of boxes on the $k$-th level is equal to $2^k$ and for every $Q$ on level $k$ we have that $|Q| = 2^{-k}L^d$. Moreover, letting $L_k$ denoting the side length of a box $Q$ on level $k$, it holds that 
\begin{equation}\label{eq:conLk}
L_k = 2^{- \lceil\frac k d \rceil} L,
\end{equation}
where $\lceil \cdot \rceil$ denotes the ceiling function. Let $N_Q$ be as in Lemma \ref{lem:lemma1}. We want to stop the decomposition when $N_Q = O(1)$, which amounts to typically having an $O(1)$ number of points. Recalling that $r$ denotes the typical distance to the closest point, i.\,e. $r =  LN^{-\frac1d}$, we stop at the scale $L_{k_*}$ such that 
\begin{equation}\label{eq:stoppingscale}
2r > L_{k_*} \ge r.
\end{equation}

\medskip
\textbf{The number densities $\rho_k$.} We consider the piecewise constant number densities
\begin{equation}\label{eq:defrhok}
\rho_k(x) := \frac{N_Q}{L^d \EE[N_Q]} \stackrel{\ref{eq:lemcomb}}{=} \frac{N_Q}{N|Q|} \quad \text{for $x \in Q$}.
\end{equation}
\begin{figure}[ht]
\center
\includegraphics[scale=0.75]{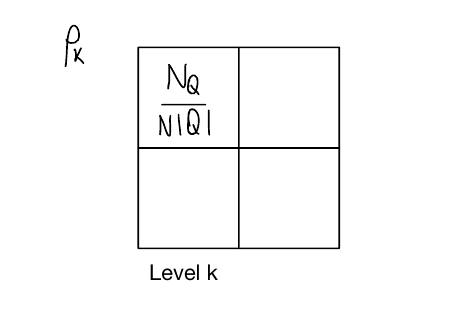}
\caption{Density $\rho_k$.}
\end{figure}
Note that the $\rho_k$ are normalized, indeed
\[
\rho_k ([0,L]^d) = \sum_{\text{$Q$ of level $k$}} |Q| \frac{N_Q}{N|Q|} = \frac1N \sum_{\text{$Q$ of level $k$}} N_Q = 1.
\]

\medskip
\textbf{The maps $T_k$.} 
Let $Q$ be a box of level $k$. By translational invariance we may assume w.\,l.\,o.\,g. that $Q$ is the box that contains the origin. By rotational invariance we may assume that on level $k+1$, $Q$ is divided along the first coordinate direction, that is into a left part $Q_-$ and a right part. Let 
\begin{equation}\label{eq:numbdensleft}
\rho_- := \frac{2 N_{Q_-}}{N_Q}\in [0,2]
\end{equation}
be its number density and let $T_{\rho_-}$ be the map of Lemma \ref{lem:auxmap1d}.
Define 
\begin{equation}\label{eq:mapTk}
T_k (x) = \bigg(L_k T_{\rho_-}\bigg( \frac{x_1}{L_k} \bigg),x_2,\dots, x_d \bigg) \quad \text{for $x\in Q$},
\end{equation} 
let $\rho_0 = \frac1{L^d}\dd x \mres [0,L]^d$ and note that by construction 
\begin{equation}\label{eq:pushforwrhok}
\rho_k = (T_k)_\# \rho_{k-1}
\end{equation} 
is the pushforward of $\rho_{k-1}$ under $T_k$. 
\begin{figure}[ht]
\center
\includegraphics[scale=0.75]{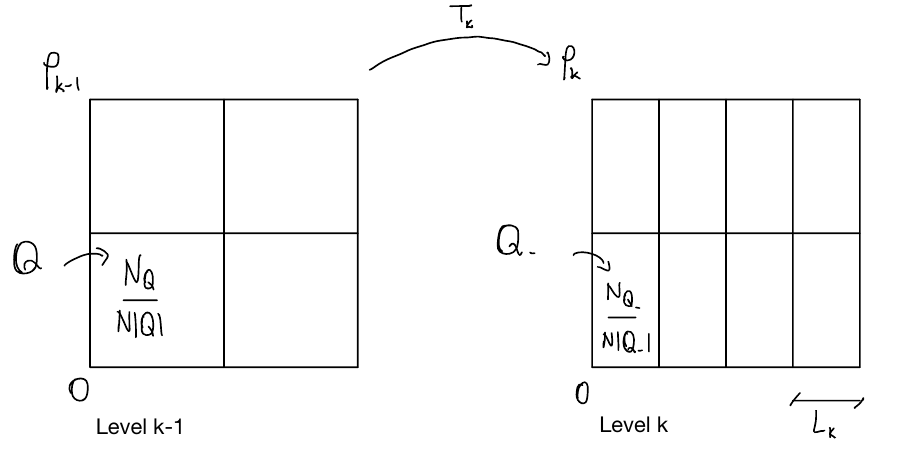}
\caption{The map $T_k$.}
\end{figure}
\begin{figure}[ht]
\center
\includegraphics[scale=0.75]{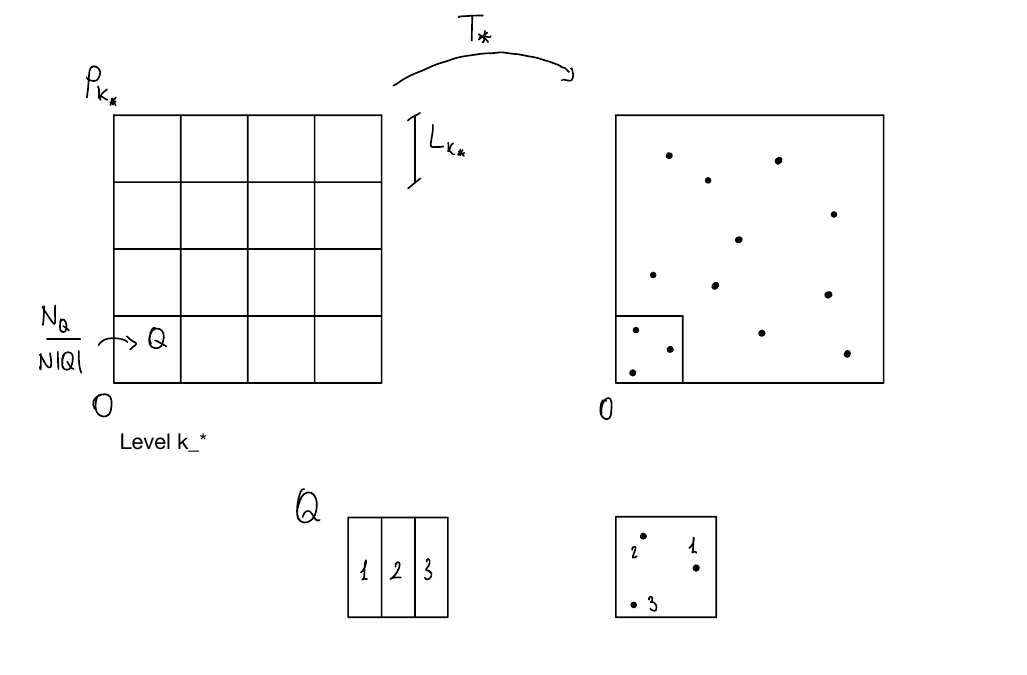}
\caption{Construction for the last mile.}
\end{figure}

\medskip
On the \emph{``stopping''} level $k_*$ we set $T_*$ such that if on a box of level $k_*$ there are no points $T_* = \id$, otherwise if on $Q$ there are $n\ge1$ points, we divide $Q$ into $n$ cells of equal volume, we enumerate the points inside the box and send the $n$-th point to the $n$-th cell of the box. Note that, by the aforementioned construction of $T_*$ we have
\begin{equation}\label{eq:pushforwstar}
(T_*)_\# \rho_{k_*} = \frac1N \sum_{i=1}^N \delta_{X_n}
\end{equation}
and
\begin{equation}\label{eq:stopscalTstar}
|T_*(x) - x| \le \sqrt{d} L_{k_*} \sim r.
\end{equation}
We then set 
\[
T:= T_* \circ T_{k_*} \circ \dots \circ T_1.
\]
Note that by construction, \eqref{eq:pushforwrhok} and \eqref{eq:pushforwstar} we have
\[
\frac1N \sum_{n=1}^N \delta_{X_n} = T_\#\dd \rho_0 = T_\# \bigg(\frac1{L^d} \dd x \mres [0,L]^d\bigg)
\]
and thus by the definition of push forward \eqref{eq:def_pushforward}
\[
|T^{-1}(X_n)| = L^d N^{-1}.
\]

\medskip
{\sc Step 2.} The estimates on $T$. Let $S_0 = \id$ and let
\[
S_k := T_k \circ \dots \circ T_1
\]
so that $S_k = T_k \circ S_{k-1}$. We claim that there exists a constant $C$ such that 
\begin{equation}\label{eq:propproofbase}
\EE \bigg[ \int |S_k - \id|^2 \dd \rho_0 \bigg] \le C \bigg( \frac{L_k}r\bigg)^{2-d} r^2 + \bigg( 1 + C \bigg(\frac r{L_k}\bigg)^d \bigg) \EE \bigg[ \int |S_{k-1} - \id|^2 \dd\rho_0 \bigg].
\end{equation}
Note that by the definition of $S_k$, we may write
\[
S_k - \id = (T_k - \id)\circ S_{k-1} + (S_{k-1} - \id),
\]
so that taking the square
\[
|S_k - \id|^2 = |S_{k-1} - \id|^2 + |(T_k - \id)\circ S_{k-1}|^2 + 2(S_{k-1} - \id)\cdot (T_k - \id) \circ S_{k-1}. 
\]
Finally, integrating the latter against $\dd \rho_0$ and taking the expectation will lead to \eqref{eq:propproofbase} once we can estimate the right hand side of 
\[
\begin{split}
\EE\bigg[\int |S_k - \id|^2 \dd \rho_0 \bigg] & =
\EE\bigg[\int |S_{k-1} - \id|^2 \dd \rho_0\bigg] + \EE\bigg[\int |(T_k - \id)\circ S_{k-1}|^2 \dd \rho_0\bigg] \\
& + 2 \EE\bigg[\int (S_{k-1} - \id)\cdot (T_k - \id) \circ S_{k-1} \dd \rho_0 \bigg]. 
\end{split}
\]

\medskip
{\sc Step 2.1.} We claim that 
\begin{equation}\label{eq:propdeprev}
\EE\bigg[\int |(T_k - \id)\circ S_{k-1}|^2 \dd \rho_0\bigg] \lesssim \bigg( \frac{L_k}r\bigg)^{2-d} r^2.
\end{equation}
Indeed, note that 
\begin{equation}\label{eq:rhokSk}
\dd \rho_{k-1} = (S_{k-1})_\# \dd \rho_0;
\end{equation}
hence we may write
\[
\int|(T_k - \id)\circ S_{k-1}|^2 \dd \rho_0 = \int|T_k - \id|^2 \dd \rho_{k-1}.
\]
By definition \eqref{eq:defrhok} of $\rho_{k-1}$,
\[
\int|T_k - \id|^2 \dd \rho_{k-1} = \sum_{\text{$Q$ on level $k-1$}} \frac{N_Q}{N|Q|} \int_Q |T_k - \id|^2.
\]
By definition \eqref{eq:mapTk}, the map $T_k$ transports only along one direction acting as a rescaled version of the map $T_{\rho_-}$ of Lemma \ref{lem:lemma1}. Thus, by a change of variables,
\[
\frac1{|Q|} \int_Q |T_k - \id|^2 = \frac{L_k^2}2 \int_{[0,2]} (T_{\rho_-} - \id)^2 \quad \text{for $\rho_- = \frac{2 N_{Q_-}}{N_Q} \in[0,2]$.}
\]
By \eqref{eq:auxmap1}
\[
\int_{[0,2]} (T_{\rho_-} - \id)^2 \lesssim (\rho_- - 1)^2,
\]
so that
\[
\int |(T_k - \id)\circ S_{k-1}|^2 \dd \rho_0  \lesssim 
 \frac{L_k^2}N \sum_{\text{$Q$ on level $k-1$}} N_Q \bigg( \frac{2N_{Q_-}}{N_Q} - 1\bigg)^2.
\]
In order to compute the expectation it is convenient to introduce the expectation conditioned on the $\sigma$-algebra generated by $N_Q$ of level at most $k-1$, that is, 
\begin{equation}\label{eq:sigmaalg}
\mathcal{F}_k := \sigma(N_Q \ | \  \text{$Q$ of level $\le k-1$}), 
\end{equation}
which we characterize now. To this purpose, note that for any $n\in \{1,\dots,N\}$ and any $Q \subset [0,L]^d$, $X_n$ conditioned on the event $\{X_n \in Q\}$  is uniformly distributed on $Q$. Indeed, by the Bayes' theorem we have that for any $A \subset [0,L]^d$
\[
\mathbb{P}(X_n \in A \ | \ X_n \in Q ) = \frac{\mathbb{P}(\{X_n \in A\}\cap\{X_n \in Q\})}{\mathbb{P}(X_n\in Q)} = \frac{\mathbb{P}(X_n \in A \cap Q)}{\mathbb{P}(X_n \in Q)} = \frac{|A \cap Q|}{|Q|}.
\]
Hence, by independence of $X_1, \dots, X_N$, the law of $N_{Q_-}$ conditioned on $\mathcal{F}_{k-1}$ is equal to the law of $N_{Q_-}$ for $N_Q$ independent and uniformly distributed points on $Q$. Thus, by Lemma \ref{lem:lemma1} we have
\begin{equation}\label{eq:propsigmaalg}
\begin{split}
&\EE[N_{Q_-} \ | \ \mathcal{F}_{k-1}] = \frac12 N_Q \quad \text{since $\theta = \frac{|Q_-|}{|Q|}=\frac12$,}\\
&\EE[(N_{Q_-} - \frac12 N_Q)^2\ | \ \mathcal{F}_{k-1}] = \frac14 N_Q \quad \text{since $\theta(1-\theta) = \frac14$,} \\
&\EE\bigg[N_Q\bigg(\frac{2N_{Q_-}}{N_Q}-1\bigg)^2\bigg] = \EE\bigg[ N_Q \EE\bigg[\bigg(\frac{2N_{Q_-}}{N_Q}-1\bigg)^2 \ \bigg| \ \mathcal{F}_{k-1}\bigg]\bigg] = 1,
\end{split}
\end{equation}
where in the last item we used the fact that $N_Q$ is $\mathcal{F}_{k-1}$ measurable by construction of the latter. By the last identity we obtain from \eqref{eq:conLk}, the fact that $2^{k-1} \sim \big( \frac L{L_k} \big)^d$ and $r^d= \frac{L^d}N$ we get
\[
\EE\bigg[ \int |(T_k-\id) \circ S_{k-1}|^2\dd \rho_0 \bigg] \lesssim \frac{L_k^2}N \#\{\text{$Q$ on level $k-1$}\} = 2^{k-1}\frac{L_k^2}N  \sim r^2 \bigg( \frac{L_k}r\bigg)^{2-d}.
\]

\medskip
{\sc Step 2.2.} We claim that 
\[
\bigg|\EE \bigg[ \int (S_{k-1} - \id)\cdot (T_k - \id) \circ S_{k-1} \dd \rho_0 \bigg]  \bigg| \lesssim \bigg(\bigg( \frac{L_k}r\bigg)^{2-2d} r^2 \EE \bigg[ \int |S_{k-1} - \id|^2 \dd \rho_0 \bigg]\bigg)^\frac12,
\]
so that by Young's product inequality we may write
\[
\begin{split}
\bigg|\EE \bigg[ \int (S_{k-1} - \id)\cdot (T_k - \id) \circ S_{k-1} \dd \rho_0 \bigg]  \bigg|
 \lesssim \bigg(\frac{L_k}r\bigg)^{2-d}r^2 + \bigg(\frac r{L_k}\bigg)^d \EE \bigg[ \int |S_{k-1} - \id|^2 \dd \rho_0 \bigg].
\end{split}
\]
Note that $S_{k-1}$ depends on $\{X_1, \dots, X_N\}$ only through $N_Q$ of level $\le k-1$ so that $S_{k-1}$ is $\mathcal{F}_{k-1}$-measurable. Thus, by the Cauchy-Schwarz inequality with respect to $(\EE[\int|\cdot|^2\, \dd \rho_0])^\frac12$, \eqref{eq:rhokSk} and the tower property of the conditional expectation we may write
\[
\begin{split}
& \bigg|\EE \bigg[ \int (S_{k-1} - \id)\cdot (T_k - \id) \circ S_{k-1} \dd \rho_0 \bigg]  \bigg| \\
& = \bigg| \EE\bigg[\int (S_{k-1} - \id) \cdot \EE[T_k - \id \ | \ \mathcal{F}_{k-1}] \circ S_{k-1} \dd \rho_0 \bigg] \bigg| \\
& \le \bigg(\EE \bigg[ \int |S_{k-1} - \id|^2 \dd \rho_0 \bigg]\bigg)^\frac12 \bigg(\EE\bigg[ \int|\EE[T_k - \id \ | \ \mathcal{F}_{k-1}]\circ S_{k-1}|^2 \dd \rho_0 \bigg]\bigg)^\frac12\\
& = \bigg(\EE \bigg[ \int |S_{k-1} - \id|^2 \dd \rho_0 \bigg]\bigg)^\frac12 \bigg(\EE\bigg[ \int|\EE[T_k - \id \ | \ \mathcal{F}_{k-1}]|^2 \dd \rho_{k-1} \bigg]\bigg)^\frac12.
\end{split}
\]
Hence it is enough to estimate the last term on the r.\,h.\,s.. By definition \eqref{eq:defrhok} of $\dd \rho_{k-1}$ and by definition \eqref{eq:mapTk} $T_{k-1}$ we may write
\[
\begin{split}
\EE \bigg[ \int| \EE[T_k - \id \ | \ \mathcal{F}_{k-1}]|^2 \dd \rho_{k-1}\bigg] & = \EE\bigg[ \sum_{\text{$Q$ on level $k-1$}} \frac{N_Q}N \frac1{|Q|} \int_Q | \EE[T_{k} - \id \ | \ \mathcal{F}_{k-1}]|^2\bigg] \\ 
& = \EE\bigg[ \sum_{\text{$Q$ on level $k-1$}} \frac{N_Q}N L_k^2 \int_{[0,2]}(\EE[T_{\rho_-} - \id \ | \ \mathcal{F}_{k-1}])^2\bigg].
\end{split}
\]
We note that since $\rho_- = \frac{2N_{Q_-}}{N_Q}$, $2- \rho_-$ has the same law under $\EE[\ \cdot\ | \ \mathcal{F}_{k-1}]$. Indeed, consider the transformation of $[0,L]^d$ that swaps the right and left half of $Q$ as in Figure \ref{fig:swap}.
\begin{figure}[ht]
\center
\includegraphics[scale=0.75]{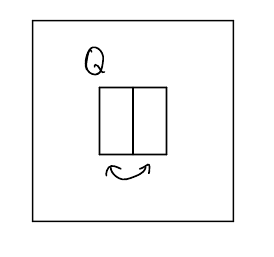}
\caption{Map that swaps the right and left side of $Q$.}
\label{fig:swap}
\end{figure}
Such a transformation preserves the law of the point cloud $\{X_1, \dots, X_N\}$, it preserves the observables $\{N_Q \ | \ \text{$Q$ on level $\le k-1$}\}$ and it converts $\rho_-$ into $2-\rho_-$. Due to this invariance in law,
\[
\EE[T_{\rho_-} - \id \ | \ \mathcal{F}_{k-1}] = \EE\bigg[\frac12(T_{\rho_-} - \id) + \frac12(T_{2-\rho_-} - \id) \ | \ \mathcal{F}_{k-1}\bigg],
\]
and thus by Jensen's inequality and Lemma \ref{lem:lemma1}
\[
\begin{split}
\int_{[0,2]}(\EE[T_{\rho_-} - \id \ | \ \mathcal{F}_{k-1}])^2 & \le \EE \bigg[ \int_{[0,2]} \bigg( \frac12(T_{\rho_-} - \id) + \frac12(T_{2 - \rho_-} - \id)  \bigg)^2 \ | \ \mathcal{F}_{k-1} \bigg] \\
& \stackrel{\eqref{eq:auxmap2}}{\lesssim} \EE[(\rho_- - 1)^4 \ | \ \mathcal{F}_{k-1}] \stackrel{\ref{eq:lemstoc}}{\lesssim} \frac1{N_Q^2}.
\end{split}
\]
Thus
\[
\begin{split}
\EE\bigg[ \int |\EE[T_k - \id \ | \ \mathcal{F}_{k-1} ]|^2 \dd \rho_{k-1}\bigg] & \lesssim \frac{L^2_k}N \sum_{\text{$Q$ on level $k-1$}} \EE\bigg[N_Q \frac1{N_Q^2}\bigg] \\
& = \frac{L^2_k}N \sum_{\text{$Q$ on level $k-1$}} \EE\bigg[I(N_Q \neq 0) \frac1{N_Q}\bigg] \\
& \stackrel{\ref{eq:lemstoc}}{\lesssim} \frac{L^2_k}{N^2} (2^{k-1})^2 \sim \bigg(\frac{L^d}N \bigg)^2 L_k^{2-2d} = \bigg( \frac{L_k}r \bigg)^{2-2d} r^2 .
\end{split}
\]

{\sc Step 3.} Conclusion. We finally show the estimate in \eqref{eq:propmapbound}. By the triangle inequality we may write
\begin{equation}\label{eq:propuppfinalstep}
\mathbb{E} \bigg[ \int |T - \id|^2 \dd \rho_0 \bigg] \le \mathbb{E} \bigg[ \int |(T_*  - \id)\circ S_{k_*}|^2 \dd \rho_0 \bigg] + \mathbb{E} \bigg[ \int |S_{k_*} - \id|^2 \dd \rho_0 \bigg]. 
\end{equation}
Let us estimate the two terms in \eqref{eq:propuppfinalstep}. For the first one by definition we may write
\begin{equation}\label{eq:propfirsttermupp}
\begin{split}
\int |(T_*  - \id)\circ S_{k_*}|^2 \dd \rho_0  &\stackrel{\eqref{eq:rhokSk}}{=} \int|T_* - \id|^2 \dd \rho_{k_* }\\
&= \sum_{\text{$Q$ on level $k_*$}} \frac1{N|Q|} N_Q \int_Q |T_* - \id|^2  \\
& \stackrel{\eqref{eq:stopscalTstar}}{\lesssim} \frac{r^2}N \sum_{\text{$Q$ of level $k_*$}} N_Q = r^2.
\end{split}
\end{equation}
Let us now turn to the second term of \eqref{eq:propuppfinalstep}. 

\medskip
{\sc Step 3.1.} We claim that the following estimate holds
\begin{equation}\label{eq:upperformulafinal}
\mathbb{E} \bigg[ \int |S_{k_*} - \id|^2 \dd \rho_0 \bigg] \lesssim \sum_{k: L \ge L_k \ge r}  \bigg(\frac {L_k} r\bigg)^{2-d} r^2.
\end{equation}
We start by noting that by an inductive argument it follows from \eqref{eq:propproofbase} that
\begin{equation}\label{eq:uppformula}
\begin{split} \mathbb{E} \bigg[ \int |S_{k_*} - \id|^2 \dd \rho_0 \bigg]
& \le C \sum_{k: L \ge L_k \ge r} \bigg( \bigg(\frac {L_k} r\bigg)^{2-d} r^2 \prod_{i: L_{k+1} \ge L_i \ge L_{k_*}} \bigg(1 + C \bigg( \frac r {L_i} \bigg)^d \bigg)\bigg) \\
& + \prod_{k: L \ge L_k \ge L_{k_*}} \bigg(1 + C \bigg( \frac r {L_k} \bigg)^d \bigg),
\end{split}
\end{equation}
with the understanding that the product in the first sum is equal to $1$ when $k=k_*$. 
Indeed, iterating for instance two times \eqref{eq:propproofbase} we obtain
\[
\begin{split}
\lefteqn{\mathbb{E} \bigg[ \int |S_{k_*} - \id|^2 \dd \rho_0 \bigg]} \\
& \le C \bigg( \frac{L_{k_*}}r \bigg)^{2-d} r^2 + C \bigg( 1+ C\bigg(\frac r {L_{k_*}}\bigg)^d \bigg) \bigg( \frac{L_{k_*-1}}r \bigg)^{2-d} r^2 \\
& + \bigg( 1+ C\bigg(\frac r {L_{k_*}}\bigg)^d \bigg) \bigg( 1+ C\bigg(\frac r {L_{k_*-1}}\bigg)^d \bigg) \mathbb{E} \bigg[ \int |S_{k_*-2} - \id|^2 \dd \rho_0 \bigg].
\end{split}
\]
By convergence of geometric series we have
\[
\sum_{k: L\ge L_k \ge r} \bigg( \frac r{L_k} \bigg)^{d} 
\lesssim 1
\]
and therefore since $1+x \le e^x$ 
\[
\prod_{k: L \ge L_k \ge L_{k_*}} \bigg(1 + C \bigg( \frac r {L_k} \bigg)^d \bigg)\le \exp\bigg( C \sum_{k: L\ge L_k \ge r} \bigg( \frac r{L_k} \bigg)^{d} \bigg) \lesssim 1.
\]
Combining this with \eqref{eq:uppformula} 
yields \eqref{eq:upperformulafinal}. 

\medskip
{\sc Step 3.2.} Estimating \eqref{eq:upperformulafinal}. We claim that
\begin{equation}\label{eq:propmain}
r^2 \sum_{k: L\ge L_k \ge r} \bigg(\frac{L_k}r\bigg)^{2-d}  \lesssim r^2
\begin{cases}
N & \text{if $d=1$}, \\
\ln N & \text{if $d=2$}, \\
1 & \text{if $d>2$}.
\end{cases}
\end{equation}
By definition of $L_k$, this is a geometric series. 

\medskip
In the one-dimensional case, the large scales $L_k \sim L$ will give the main contribution:
\[
\sum_{k: L\ge L_k \ge r} \frac{L_k}r \sim \frac L r = N.
\]

\medskip
In higher dimensions $d>2$ the small scales $L_k \sim r$ dominate:
\[
\begin{split}
\sum_{k: L\ge L_k \ge r} \bigg( \frac{L_k}r\bigg)^{2-d} \sim 1.
\end{split}
\]

\medskip
In the critical dimension $d=2$ all scales contribute equally and we pick up the number of summands
\[
\sum_{k: L\ge L_k \ge r} \bigg( \frac{L_k}r\bigg)^{2-d} = \# \{ k \, | \, L \ge L_k \ge r \} \stackrel{\eqref{eq:conLk},\eqref{eq:stoppingscale}}{\le} 1+d \log_2 \bigg( \frac L r \bigg) = 1 + \log_2 N
\]

Finally, \eqref{eq:propuppfinalstep}, \eqref{eq:propfirsttermupp}, \eqref{eq:upperformulafinal} and \eqref{eq:propmain} combine to \eqref{eq:propmapbound}.
\end{proof}
\section{Lower Bound}

We now turn to the lower bound of Theorem \ref{thm:AKTasymptotic}. As it will be clear from the proof we do not give the full argument, a complete proof can be found in \cite[Lemma 2.7]{HMO21}.

\medskip
Our argument relies on the dual formulation of the matching problem, that for the $L^1$-cost reads as follows
\[
\frac1N \min_{\sigma \in \mathcal{S}_N} \sum_{n=1}^N |Y_{\sigma(n)} - X_n| = \frac1N\max_{\phi \in \Lip_1} \sum_{n=1}^N \phi(X_n) - \phi(Y_n),
\]
where $\Lip_1$ denotes the set of functions $\phi: [0,L]^d \rightarrow \mathbb{R}$ with Lipschitz constant $1$. Being more precise we will use the previous equation only in the form of
\begin{equation}\label{eq:dualformL1}
\frac1N \min_{\sigma \in \mathcal{S}_N} \sum_{n=1}^N |Y_{\sigma(n)} - X_n|  \ge \frac1N\sup_{\phi} \frac{\sum_{n=1}^N \phi(X_n) - \phi(Y_n)}{\sup_{x \in [0,L]^d} |\nabla \phi(x)|},
\end{equation}
where the supremum is taken over all smooth functions.
Proving the latter is straightforward: Given a smooth function $\phi$, by the mean value theorem we have 
\[
\phi(x) - \phi(y) \le \sup |\nabla \phi|  |x-y|.
\]
Since, for any $\sigma \in S_N$ we have $\sum_{n=1}^N \phi (Y_n) = \sum_{n=1}^N \phi(Y_{\sigma(n)})$ and thus
\[
 \sum_{n=1}^N |Y_{\sigma(n)} - X_n|  \ge \sup_{\phi \in \Lip} \frac{\sum_{n=1}^N \phi(X_n) - \phi(Y_n)}{\sup_{x \in [0,L]^d} |\nabla \phi(x)|}.
\]
Taking the minimum over $S_N$ we obtain \eqref{eq:dualformL1}. The proof of the reverse inequality in \eqref{eq:dualformL1} is more involved, and it will not be discussed in this notes. 
Thanks to \eqref{eq:dualformL1} we can lower bound \eqref{eq:thmasymptotic} via a max problem, hence in order to prove the lower bound \eqref{eq:thmasymptotic} it is sufficient to construct a \emph{``bad''} candidate for the dual formulation.

\begin{proposition}\label{prop:badcandidatedualL1}
Let $X_1, \dots, X_N$ and $r$ be as in Theorem \ref{thm:AKTasymptotic}. Then there exists a differentiable $\phi : [0,L]^d \rightarrow \mathbb{R}$ such that
\begin{equation}\label{eq:badcanddualL1est}
\EE \bigg[\frac1N \sum_{n=1}^N \phi(X_n) - \frac1{L^d} \int_{[0,L]^d} \phi \dd x\bigg] \gtrsim 
\big(\EE[\sup|\nabla \phi|^2]\big)^\frac12
 r  
\begin{cases}
\sqrt{N} & \text{if $d=1$},\\
\sqrt{\ln N} & \text{if $d=2$}, \\
1 & \text{if $d>2$}.
\end{cases}
\end{equation}
\end{proposition}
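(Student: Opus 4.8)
The plan is to build the dual candidate $\phi$ by the same multiscale mechanism that produced the transport map $T$, reading the construction ``backwards'': at each dyadic level $k$ we have a fluctuating density $\rho_k$, and we want $\phi$ to be, up to normalization, a function whose gradient resolves these fluctuations scale by scale. Concretely, I would take $\phi = \sum_k \phi_k$ where on each box $Q$ of level $k$ the contribution $\phi_k$ is a bump of amplitude $\sim L_k\,(\rho_--1)$ (with $\rho_-=2N_{Q_-}/N_Q$ as in \eqref{eq:numbdensleft}), supported on $Q$ and oscillating between its two children, so that $\phi_k$ ``sees'' exactly the level-$k$ density fluctuation. Since the boxes at a fixed level are disjoint and the amplitudes are small, $\sup|\nabla\phi|^2 \lesssim \sum_k \sup_Q(\rho_--1)^2$, and by Lemma \ref{lem:lemma1}\ref{eq:lemstoc} together with \eqref{eq:propsigmaalg} each level contributes $O(1/N_Q)\cdot$(geometry), whose expectation is controlled; the delicate point is to keep $\EE[\sup|\nabla\phi|^2]$ of order $1$ (not growing in $N$), so one in fact normalizes $\phi$ by dividing by the worst-case gradient, which is why the statement carries $(\EE[\sup|\nabla\phi|^2])^{1/2}$ on the right.

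Next I would compute the ``energy'' $\EE[\frac1N\sum_n\phi(X_n) - \frac1{L^d}\int\phi]$. The key observation is that $\frac1N\sum_n\phi(X_n) = \int \phi\,\dd\rho_{k_*}$ (the empirical measure at the stopping scale, up to the $O(1)$ last-mile error which is harmless for a lower bound), while $\frac1{L^d}\int\phi\,\dd x = \int\phi\,\dd\rho_0$. Telescoping, $\int\phi\,\dd(\rho_{k_*}-\rho_0) = \sum_k \int\phi\,\dd(\rho_k-\rho_{k-1})$, and by orthogonality of the bumps across scales only the level-$k$ piece $\phi_k$ pairs nontrivially with $\rho_k-\rho_{k-1}$. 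That pairing, on a box $Q$ of level $k$, is $\sim L_k(\rho_--1)\cdot|Q|\cdot(\rho_k-\rho_{k-1})|_Q \sim L_k(\rho_--1)\cdot\frac{|Q|}{L^d}\cdot\frac{N_Q}{N|Q|}(\rho_--1)$, i.e.\ $\sim \frac{L_k}{N}N_Q(\rho_--1)^2$ per box. Summing over boxes at level $k$ and taking expectation via \eqref{eq:propsigmaalg} (which gives $\EE[N_Q(\rho_--1)^2]=1$), this yields $\sim \frac{L_k}{N}\cdot 2^{k} \sim \frac{L_k}{N}(L/L_k)^d = r^2 L_k^{1-d}L^{d-2}\cdot$const; matching the scaling in \eqref{eq:propmain} one gets exactly the geometric sum $r^2\sum_{k:L\ge L_k\ge r}(L_k/r)^{2-d}$ — wait, more carefully the per-level gain should be $\sim r^2(L_k/r)^{2-d}$ so that, as in Step 3.2 of the previous proof, summing gives $r^2$ times $N$, $\ln N$, or $1$ according to $d=1,2,>2$. (I would double-check the exponent bookkeeping here against \eqref{eq:propproofbase}; it must come out to the square root of that, since \eqref{eq:badcanddualL1est} is a first moment, not a second moment — so in fact one keeps only the single ``resonant'' scale in low dimension, or more honestly uses Cauchy--Schwarz to convert the sum-of-variances lower bound into the stated rate.)

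The main obstacle, and the reason the excerpt says the full argument is deferred to \cite[Lemma 2.7]{HMO21}, is precisely this last normalization-versus-energy tension: one needs a single $\phi$ that \emph{simultaneously} has $\EE[\sup|\nabla\phi|^2]\lesssim 1$ and energy $\gtrsim r^2\cdot(\sqrt N,\sqrt{\ln N},1)$ — note the energy in \eqref{eq:badcanddualL1est} scales like $r\sqrt{N}$ etc.\ whereas the naive per-scale estimates above produced $r^2$-order quantities, so the honest route is to fix the single scale $k$ where the fluctuation is largest (in $d=1$, the macroscopic scale $L_k\sim L$; in $d=2$, one superposes $\sim\ln N$ scales with carefully chosen independent signs so gradients add in $\ell^2$ while the energy adds coherently; in $d>2$, the scale $L_k\sim r$). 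Handling the $d=2$ case rigorously — choosing the signs/phases of the level bumps so that the gradient does not blow up while the energies of the $\sim\log N$ scales still sum — is the genuinely hard step, and it is exactly the point at which these notes stop and cite \cite{HMO21}; I would present the $d\neq2$ cases in full and sketch the $d=2$ superposition, flagging the sign-choice lemma as the crux.
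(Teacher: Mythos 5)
Your overall strategy --- build $\phi$ as a sum of per-scale bumps $\phi_k$ with amplitude proportional to $\rho_--1$, pair the bump at scale $k$ with the density increment $\rho_k-\rho_{k-1}$ via a telescoping sum, and use $\EE[N_Q(\rho_--1)^2]=1$ from \eqref{eq:propsigmaalg} to evaluate the per-scale contribution --- is exactly the paper's. The martingale observation you allude to (each scale ``sees'' only its own increment) is made precise in the paper by $\EE[\nabla\phi_k\,|\,\mathcal F_{k-1}]=0$, which follows from the left--right swap symmetry and $\phi_{2-\rho_-}=-\phi_{\rho_-}$.

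Where you go off track is in diagnosing the ``hard part.'' The inequality \eqref{eq:badcanddualL1est} is \emph{homogeneous of degree one} in $\phi$: replacing $\phi$ by $c\phi$ multiplies both sides by $c$. So there is nothing to normalize --- you do not need $\EE[\sup|\nabla\phi|^2]\lesssim1$, nor is there any ``square-root mystery'' to reconcile. The paper builds $\Phi=\Phi_{k_*}=\sum_{k\le k_*}\phi_k$ with \emph{energy} $\gtrsim r^2\sum_k(L_k/r)^{2-d}\sim r^2\{N,\ln N,1\}$ and, thanks to the vanishing cross-terms, $\sup_x\EE[|\nabla\Phi|^2]\lesssim r^2\sum_k(L_k/r)^{2-d}\sim r^2\{N,\ln N,1\}$ as well; dividing gives the ratio $r\{\sqrt N,\sqrt{\ln N},1\}$, as required. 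Both quantities scale like $r^2\cdots$; it is the \emph{quotient}, not either factor alone, that you want. In particular your proposed remedies --- ``keep only the single resonant scale,'' or ``superpose $\sim\ln N$ scales with carefully chosen independent signs so gradients add in $\ell^2$ while energies add coherently'' --- are not needed and are not what the paper does. The gradients already add in $\ell^2$ in conditional expectation, automatically, by the swap symmetry; there are no signs to choose. (In $d\neq2$ the geometric sum is dominated by one scale for free, and in $d=2$ all $\sim\ln N$ scales contribute; no separate construction is used.) Two smaller points: (i) your identity $\frac1N\sum_n\phi(X_n)=\int\phi\,\dd\rho_{k_*}$ holds only after taking $\EE$, via conditioning on $\mathcal F_{k_*+1}$ as in \eqref{eq:unifphistar}--\eqref{eq:propcandlow1}, not pathwise; (ii) the one genuine gap the paper acknowledges is not about signs at all, but about upgrading $\sup_x\EE[|\nabla\Phi_{k_*}(x)|^2]$ to $\EE[\sup_x|\nabla\Phi_{k_*}(x)|^2]$, i.e.\ \eqref{eq:supestimate} --- a concentration/maximal-inequality step deferred to \cite{HMO21} --- and that is the only place the argument is incomplete.
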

Note that the candidate $\phi$ that satisfies \eqref{eq:badcanddualL1est} is not \emph{almost surely} in $\Lip_1$. As it will be clear from the proof, the construction is not fine enough to give a $\Lip_1$ map in almost sure sense, but only in \emph{``expectation''}. An almost sure statement will require more involved techniques, see for instance \cite[Section 6]{BobLe}. 
We refer also to \cite[Lemma 2.7]{HMO21} for a construction based on Martingale arguments.

\medskip
We are now in a position to complete the proof of Theorem \ref{thm:AKTasymptotic} given Proposition \ref{prop:badcandidatedualL1}. 
\begin{proof}[Proof of the lower bound of Theorem \ref{thm:AKTasymptotic}]
Note that by the Cauchy-Schwarz inequality, for any $\sigma \in \mathcal{S}_N$ we may write
\begin{equation}\label{eq:prooflow1}
\bigg( \frac1N \sum_{n=1}^N|Y_{\sigma(n)} - X_n|\bigg)^2 \le \frac1N \sum_{n=1}^N |Y_{\sigma(n)} - X_n|^2.
\end{equation}
Furthermore, by the duality formulation \eqref{eq:dualformL1} 
\begin{equation}\label{eq:prooflow2}
 \frac1N \min_{\sigma \in \mathcal{S}_n} \sum_{n=1}^N|Y_{\sigma(n)} - X_n|  \ge \sup_{\phi}
\frac{  \frac1N \sum_{m=1}^N \phi(Y_m) - \frac1N \sum_{n=1}^N \phi(X_n) }{\sup | \nabla \phi|}. 
\end{equation}
Note that $\phi$ depends only on $X_1, \dots, X_N$ and is independent on $Y_1, \dots, Y_N$ which are uniformly distributed on $[0,L]^d$. Thus,
\begin{equation}\label{eq:prooflow3}
\EE\bigg[ \frac1N \sum_{n=1}^N \phi(X_n) - \frac1N \sum_{m=1}^N \phi (Y_m) \bigg] = \EE \bigg[ \frac1N \sum_{n=1}^N \phi(X_n) - \frac1{L^d} \int_{[0,L]^d} \phi \dd x \bigg].
\end{equation}
Hence by the Cauchy-Schwarz inequality w.\,r.\,t. $\EE[\cdot]$ we may write
\begin{equation}\label{eq:prooflow4}
\begin{split}
\lefteqn{\EE \bigg[\bigg( \frac{\frac1N \sum_{n=1}^N \phi(X_n) - \frac1N \sum_{m=1}^N \phi(Y_m)}{\sup | \nabla \phi |}\bigg)^2\bigg]} \\ & \ge \frac{\bigg( \EE \bigg[ \frac1N \sum_{n=1}^N \phi(X_n) - \frac1N \sum_{m=1}^N \phi(Y_m) \bigg] \bigg)^2}{\EE[\sup |\nabla \phi|^2]}.
\end{split}
\end{equation}
Finally, the lower bound \eqref{eq:thmasymptotic} follows by taking the expectation in \eqref{eq:prooflow1} and combining it with \eqref{eq:prooflow2}, \eqref{eq:prooflow3}, \eqref{eq:prooflow4} and \eqref{eq:badcanddualL1est}.
\end{proof}
Finally, we conclude this section with an incomplete proof of Proposition \ref{prop:badcandidatedualL1}.
\begin{proof}[Incomplete proof of Proposition \ref{prop:badcandidatedualL1}]
{\sc Step 1. }Building block for the potential $\phi$.  For $\rho_-\in [0,2]$ let $\rho$ be the measure defined as in \eqref{eq:defrhomeas}. 
Fix a smooth reference function $\zeta$ compactly supported in $(0,1)$ with $\int \zeta \dd x = 1$ and consider
\[
\phi_{\rho_-} (x) = (\rho_- - 1) (\zeta(x) - \zeta(x-1)).
\]
\begin{figure}[ht]%
\center%
\includegraphics[scale=0.75]{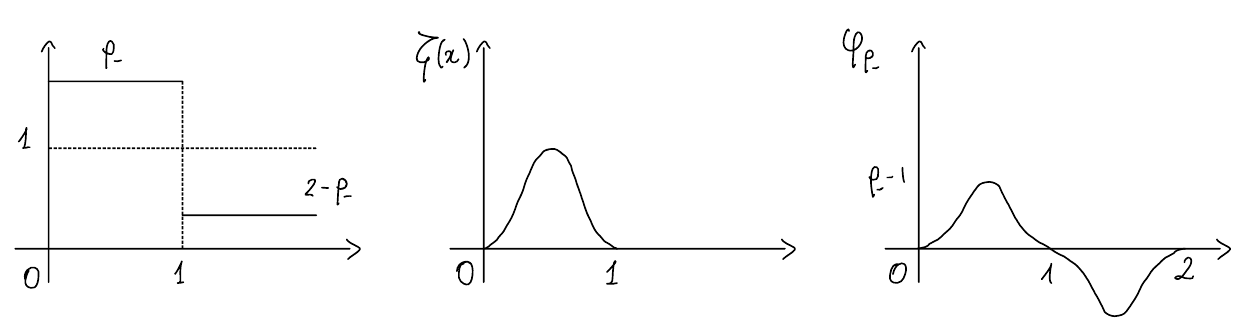}%
\caption{Graph of $\rho_-, \zeta$ and $\phi_{\rho_-}$.}%
\end{figure}%

Then, by construction we have
\begin{equation}\label{eq:eqrho-}
\begin{split}
& \int_{[0,2]} \phi_{\rho_-} \dd \rho = 2(\rho_- -1)^2, \quad \int_{[0,2]} \phi_{\rho_-} \dd x = 0, \ \text{and $\phi_{2-\rho_-} = - \phi_{\rho_-}$}, \\
& \big(\phi_{\rho_-} (x)\big)^2 + \bigg( \frac{\dd \phi_{\rho_-}}{\dd x}(x)\bigg)^2 + \bigg( \frac{\dd^2\phi_{\rho_-}}{\dd x^2}(x) \bigg)^2 \lesssim (\rho_- - 1)^2, \quad \text{for all $x \in [0,2]$}
\end{split}
\end{equation}
\medskip
{\sc Step 2.} Inductive construction. We repeat the construction of boxes of {\sc Step 1} of Proposition \ref{prop:mapupper} and put ourselves into the position of \eqref{eq:mapTk}. Let $\rho_-$ be defined as in \eqref{eq:numbdensleft} and define for $x \in Q$
\begin{equation}\label{eq:concsafe}
\phi_k (x) = L^2_k \phi_{\rho_-}\bigg( \frac{x_1}{L_k}\bigg) \zeta \bigg(\frac{x_2}{L^{(2)}_k}\bigg)\cdot ... \cdot \zeta \bigg(\frac{x_d}{L^{(d)}_k} \bigg),
\end{equation}
where $L^{(i)}_k \in \{ L_k, \frac {L_k}2 \}$ is the length of the side of $Q$ in the coordinate $x_i$, $i=2,\dots, d$.
\begin{figure}[ht]
\center
\includegraphics[scale=0.5]{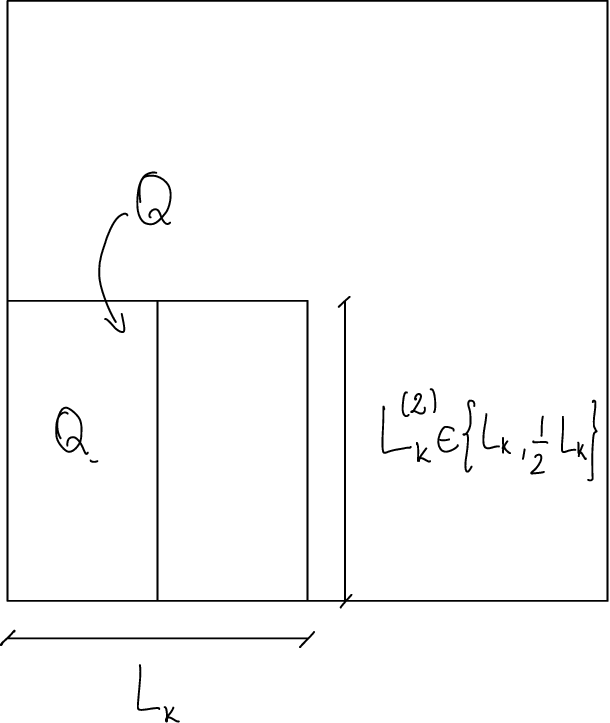}
\caption{Level $k$.}
\end{figure}

\medskip
Define the potential 
\[
\Phi_k := \phi_1 + \dots + \phi_k,
\]
with the understanding that $\Phi_0 = 0$. Note that by definition \eqref{eq:concsafe} $\phi_k = 0$ in a neighborhood of $\partial Q$, therefore $\Phi_k$ is smooth and by \eqref{eq:eqrho-} $\Phi_k$ has vanishing spatial average for any $k=1, \dots, k_*$. Furthermore we claim that on the ``stopping'' scale $k_*$ it holds that
\begin{equation}\label{eq:propcandlow1}
 \EE \bigg[ \frac1N \sum_{n=1}^N \Phi_{k_*} (X_n) - \int \Phi_{k_*} \dd \rho_{k_*} \bigg]  =0. 
\end{equation}
Recall that $X_n$ conditioned on the $\sigma$-algebra $\mathcal{F}_{k_*+1}$ defined in \eqref{eq:sigmaalg} is uniformly distributed on the cube $Q$ that contains $X_n$. In particular we have
\begin{equation}\label{eq:unifphistar}
I(X_n \in Q) \mathbb{E}[\Phi_{k_*}(X_n) \ | \ \mathcal{F}_{k_*+1}]  = I(X_n \in Q) \frac1{|Q|} \int_Q \Phi_{k_*}(x) \,\mathrm{d}x. 
\end{equation}
Note that for any $n \in \{1, \dots, N\}$ there exists at most one cube $Q$ of level $k_*$ that contains $X_n$, thus by the tower property of the conditional expectation we may write
\[
\begin{split}
\mathbb{E}\bigg[\frac 1N \sum_{n=1}^N \Phi_{k_*}(X_n)\bigg] & = \mathbb{E}\bigg[\sum_{\text{$Q$ on level $k_*$}}\sum_{n=1}^N I(X_n \in Q) \frac 1N \mathbb{E}[  \Phi_{k_*}(X_n)\ | \ \mathcal{F}_{k_*+1} ]  \bigg] \\
& \stackrel{\eqref{eq:unifphistar}}{=} \mathbb{E}\bigg[\sum_{\text{$Q$ on level $k_*$}} \sum_{n=1}^N I(X_n \in Q) \frac1{N|Q|}\int_Q \Phi_{k_*}(x)\, \mathrm{d}x \bigg] \\
& = \mathbb{E}\bigg[\sum_{\text{$Q$ on level $k_*$}}  \frac{N_Q}{N|Q|}\int_Q \Phi_{k_*}(x)\, \mathrm{d}x \bigg] \\ 
& \stackrel{\eqref{eq:defrhok}}{=} \mathbb{E}\bigg[\int \Phi_{k_*} \, \mathrm{d}\rho_{k_*}\bigg],
\end{split}
\]
which yields \eqref{eq:propcandlow1}.

\medskip
{\sc Step 3.} We argue that in order to establish \eqref{eq:badcanddualL1est} it is enough to show that for any $k=1, \dots, k_*$
\begin{equation}\label{eq:lowstep3.1}
\EE \bigg[ \int \Phi_k \dd \rho_k\bigg] - \EE\bigg[ \int \Phi_{k-1} \dd \rho_{k-1} \bigg] = \EE \bigg[ \int \phi_k \dd \rho_k \bigg] \gtrsim r^2 \bigg( \frac{L_k}{r} \bigg)^{d-2},
\end{equation}
and
\begin{equation}\label{eq:lowstep3.2}
\sup_{x\in [0,L]^d} \EE [|\nabla \Phi_k (x)|^2] \lesssim r^2 \bigg( \frac{L_k}{r} \bigg)^{d-2} + \sup_{x\in [0,L]^d} \EE [|\nabla \Phi_{k-1} (x)|^2].
\end{equation}
Indeed, once \eqref{eq:lowstep3.1} and \eqref{eq:lowstep3.2} are established by iteration we may write, arguing as in Proposition \ref{prop:mapupper}:
\begin{equation}\label{eq:iteration1low}
\begin{split}
\EE\bigg[ \int \Phi_{k_*} \dd \rho_{k_*} - \int \Phi_{k_*} \dd \rho_0 \bigg] & = \EE\bigg[ \int \Phi_{k_*} \dd \rho_{k_*} - \int \Phi_{0} \dd \rho_0 \bigg]  \\
& \gtrsim r^2 \sum_{k: L \ge L_k \ge r}  \bigg( \frac{L_k}{r} \bigg)^{d-2}  \\
& \gtrsim r^2 
\begin{cases}
N & \text{if $d=1$}, \\
\ln N & \text{if $d=2$},\\
1 & \text{if $d>2$},
\end{cases}
\end{split}
\end{equation}
and
\[
\sup_{x\in [0,L]^d} \EE [|\nabla \Phi_{k_*} (x)|^2] \lesssim \sum_{k: L \ge L_k \ge r}  \bigg( \frac{L_k}{r} \bigg)^{d-2} \lesssim r^2 
\begin{cases}
N & \text{if $d=1$}, \\
\ln N & \text{if $d=2$},\\
1 & \text{if $d>2$}.
\end{cases}
\]
Thus, the proof would be completed once the latter estimate is improved to
\begin{equation}\label{eq:supestimate}
\EE [\sup_{x\in [0,L]^d}  |\nabla \Phi_{k_*} (x)|^2] \lesssim \sum_{k: L \ge L_k \ge r}  \bigg( \frac{L_k}{r} \bigg)^{d-2} \lesssim r^2 
\begin{cases}
N & \text{if $d=1$}, \\
\ln N & \text{if $d=2$},\\
1 & \text{if $d>2$}.
\end{cases}
\end{equation}
This would require to prove 
\[
\sup_{x\in [0,L]^d} \EE [ |\nabla \Phi_{k_*} (x)|^2] \ge \EE [\sup_{x\in [0,L]^d}  |\nabla \Phi_{k_*} (x)|^2],
\]
which in turn would imply the equality as the other direction is straightforward by definition of $\sup$. Since the aim of this notes is to give an introduction to the techniques used in order to study the asymptotics of the matching cost we will not prove \eqref{eq:supestimate}. 
%

%

\medskip
{\sc Step 3.1.} Proof of \eqref{eq:lowstep3.1}. Let $\mathcal{F}_k$ denote the $\sigma$-algebra generated by $N_Q$ of level at most $k-1$ as in \eqref{eq:sigmaalg}. Note that by definition and by the tower property of the conditional expectation we have
\[
\EE \bigg[ \int \Phi_k \dd \rho_k \bigg] - \EE \bigg[ \Phi_{k-1} \dd \rho_{k-1} \bigg] = \EE \bigg[ \int \phi_k \dd  \rho_k \bigg] + \EE \bigg[ \int \Phi_{k-1} (\EE [ \dd \rho_k \ | \ \mathcal{F}_{k-1}] - \dd \rho_k) \bigg].
\]
We claim that $\EE[\dd \rho_k \ | \ \mathcal{F}_{k-1}] = \dd \rho_{k-1}$. Indeed, on a given box $Q$ on level $k-1$ letting $Q_-, Q_+$ denote respectively its left and right half, we may write
\[
\begin{split}
\EE [ \dd \rho_k \ | \ \mathcal{F}_{k-1}] & = \EE\bigg[ \frac{N_{Q_{-/+}}}{N|Q_{-/+}|} \ \bigg| \ \mathcal{F}_{k-1} \bigg] \dd x\\
& = \frac2{N|Q|} \EE[N_{Q_{-/+}} \ | \ \mathcal{F}_{k-1}] \dd x \\
&\stackrel{\eqref{eq:propsigmaalg}}{=} \dd \rho_{k-1}.
\end{split}
\]
We now turn to the estimate on right hand side of \eqref{eq:lowstep3.1}. Note that by definition 
\[
\begin{split}
\int \phi_k \dd \rho_k & = \sum_{\text{$Q$ of level $k-1$}} \frac{N_Q}N \frac1{|Q|} \int_Q \phi_k \frac{\dd \rho_k}{\dd \rho_{k-1}} \dd x \\ 
& = \sum_{\text{$Q$ of level $k-1$}} \frac{N_Q}N L^2_k \int_{[0,2]} \phi_{\rho_-} \dd \rho \\
& \stackrel{\eqref{eq:eqrho-}}{=} \sum_{\text{$Q$ of level $k-1$}} \frac{N_Q}N L^2_k 2 (\rho_- - 1)^2,
\end{split}
\] 
where $\rho_- = \frac{2N_{Q_-}}{N_Q}$. Thus taking the expectation, we obtain
\[
\EE \bigg[ \int \phi_k \dd \rho_k \bigg] = \frac{2L_k^2}N \sum_{\text{$Q$ of level $k-1$}} \EE \bigg[ N_Q \bigg(\frac{2N_{Q_-}}{N_Q} -1\bigg)^2  \bigg] = 2 \frac{L^2_k}N 2^{k-1} \sim \frac{L_k^2}N \bigg(\frac L{L_k}\bigg)^d.
\]

\medskip
{\sc Step 3.2.} Proof of \eqref{eq:lowstep3.2}. By definition of $\Phi_k$ we have the identity
\[
|\nabla \Phi_k|^2 = |\nabla \Phi_{k-1}|^2 + |\nabla \phi_k|^2 + 2 \nabla\Phi_{k-1}\cdot \nabla \phi_k
\]
and thus taking the expectation 
\[
\EE[| \nabla \Phi_k|^2] = \EE[|\nabla \Phi_{k-1}|^2] + \EE[|\nabla \phi_k|^2] + 2\EE[ \nabla\Phi_{k-1} \cdot\EE[\nabla \phi_k \ | \ \mathcal{F}_{k-1}]].
\]
We argue that $\EE[\nabla \phi_k \ | \ \mathcal{F}_{k-1}]=0$. W.\,l.\,o.\,g. we can assume that $d=2$, then for a given box on level $k$ by definition we have
\begin{equation}\label{eq:gradientpotential}
\nabla \phi_k (x) = \bigg( L_k \frac{\dd \phi_{\rho_-}}{\dd x}\bigg( \frac{x_1}{L_k}\bigg)\zeta\bigg( \frac{x_2}{L_k^{(2)}} \bigg), \frac{L^2_k}{L_k^{(2)}} \phi_{\rho_-} \bigg(\frac{x_1}{L_k}\bigg) \frac{\dd \zeta}{\dd x}\bigg(\frac{x_2}{L_k^{(2)}}\bigg) \bigg),
\end{equation}
and thus
\[
\begin{split}
& \EE[\nabla \phi_k (x) \ | \ \mathcal{F}_{k-1}] \\
&= \bigg( L_k \frac{\dd}{\dd x}\EE[\phi_{\rho_-} \ | \ \mathcal{F}_{k-1}] \bigg( \frac{x_1}{L_k}\bigg)\zeta\bigg( \frac{x_2}{L_k^{(2)}} \bigg), \frac{L^2_k}{L_k^{(2)}} \EE[\phi_{\rho_-} \ | \ \mathcal{F}_{k-1}] \bigg(\frac{x_1}{L_k}\bigg) \frac{\dd \zeta}{\dd x}\bigg(\frac{x_2}{L_k^{(2)}}\bigg) \bigg),
\end{split}
\]
so that we need to show $\EE[\phi_{\rho_-} \ | \ \mathcal{F}_{k-1}] = 0$. Recall from {\sc Step 2.2} in Proposition \ref{prop:mapupper} that $\rho_-$ and $2-\rho_-$ have the same law under $\EE[\ \cdot \ | \ \mathcal{F}_{k-1}]$ and that by construction $\phi_{\rho_-} = -\phi_{2-\rho_-}$ so that $\EE[\phi_{\rho_-} \ | \ \mathcal{F}_{k-1}] = - \EE[\phi_{\rho_-} \ | \ \mathcal{F}_{k-1}]$ and thus $\EE[\phi_{\rho_-} \ | \ \mathcal{F}_{k-1}]=0$. Now let us estimate the supremum of the gradient of the potential $\phi_k$. By \eqref{eq:gradientpotential} we have
\[
|\nabla \phi_k (x)|^2 \lesssim L_k^2 \bigg( \phi_{\rho_-}^2 + \bigg( \frac{\dd \phi_{\rho_-}}{\dd x} \bigg)^2\bigg) \bigg( \frac{x_1}{L_k}\bigg) \lesssim L_k^2 (\rho_- - 1)^2 = L_k^2 \bigg(\frac{2N_{Q_-}}{N_Q} - 1\bigg)^2,
\]
and thus by Lemma \ref{lem:lemma1} we may write
\[
\begin{split}
\EE [ | \nabla \phi_k (x)|^2] & \lesssim L_k^2 \EE \bigg[ \frac4{N_Q^2} \EE[ (N_{Q_-}-\frac{N_Q}2)^2 \ | \ \mathcal{F}_{k-1}] \bigg] \\
& \stackrel{\eqref{eq:propsigmaalg}}{=} L_k^2 \EE \bigg[ \frac4{N_Q^2} \frac{N_Q}4  \bigg] \\
& \lesssim L_k^2 \EE \bigg[ I(N_Q\neq 0) \frac1{N_Q}   \bigg] \\
& \stackrel{\ref{eq:lemstoc}}{\lesssim} \frac{L^d}N L_k^{2-d} = r^2 \bigg( \frac{L_k}r \bigg)^{d-2}.
\end{split}
\]
\end{proof}

\section*{Acknowledgment}
The authors would like to thank C. Wagner for his proofreading of an earlier version of the draft.

\bibliographystyle{plain}
\bibliography{OT}
\end{document}